  \numberwithin{equation}{section}
  \theoremstyle{definition}     \newtheorem{Defi}{Definition}[section]
  \theoremstyle{plain}          \newtheorem{theorem}[Defi]{Theorem}
  \theoremstyle{plain}          
  \theoremstyle{plain}          
  \theoremstyle{plain}          
  \theoremstyle{remark}         \newtheorem{example}[Defi]{Example}
  \newcommand{\lgg}{\left| \!\left|}            
  \newcommand{\rgg}{\right| \!\right|}          
  \newcommand{\ldg}{\left|}                     
  \newcommand{\rdg}{\right|}                    
  \newcommand{\wtilde}{\widetilde}              
  \newcommand{\si}{\sigma}
  \newcommand{\e}{\varepsilon}
  \newcommand{\de}{\delta}
  \newcommand{\lam}{\lambda}
  \newcommand{\ds}[1]{\mathbb{#1}}              
  \newcommand{\dtp}{d^{\frac1{2p}}}                        
  \author{Qin Zhang \footnote{\small\textit{{Address: \em 3-8-1} Komaba, Meguro, Tokyo, {\em 153-8914}, Japan.}
  \small\textit{The author is supported by the Japanese Government Scholarship (No.052111). Email Address: zqhustwhu@hotmail.com}}}
  \title{Noncommutative maximal ergodic inequality for non-tracial $L^{1}$-spaces}
  \date{\small\textit{Graduate School of Mathematical Sciences, The University of Tokyo}  }
\begin{document}
  \setlength{\baselineskip}{3.5ex plus 0.3ex minus 0.1ex}     

  \maketitle

  \begin{abstract}
    We extend the noncommutative $L^1$-maximal ergodic inequality for semifinite von Neumann algebras established by Yeadon in 1977 to the framework of noncommutative $L^1$-spaces associated with $\sigma$-finite von Neumann algebras. Since the semifinite case of this result is one of the two essential parts in the proof of noncommutative maximal ergodic inequality for tracial $L^p$-spaces $(1<p<\infty)$ by Junge-Xu in 2007, we hope our result will be helpful to establish a complete noncommutative maximal ergodic inequality for non-tracial $L^p$-spaces in the future.
  \end{abstract}

  \section{Introduction}
  \indent\quad Theory of von Neumann algebras is regarded as the noncommutative measure and integration theory (Chapter IX, \cite{30}), so it is natural to consider extensions of classical ergodic theorems for spaces of measurable functions to the framework of noncommutative spaces associated with von Neumann algebras. Such an extension topic appeals to many mathematicians and they had interesting results even from 1970's (for example, the pioneering works of \cite{19} and \cite{32}). Since then many classical mean ergodic theorems and ergodic theorems of other types were successfully transformed to the noncommutative context which is a semifinite von Neumann algebra or a non-commutative $L^{p}$-space associated with a semifinite von Neumann algebra, and some authors even considered a general von Neumann algebra or a non-tracial $L^{p}$-space associated with it. Among these noncommutative ergodic theorems, however, the problem of finding a noncommutative analogue of the famous Dunford-Schwartz maximal ergodic inequality (\cite{8}) was left open until the appearance of Junge-Xu's prominent paper in 2007. The main obstacle in this problem is that it is difficult to define the supremum of a sequence of operators even in the finite-dimensional Hilbert space cases, although it is straightforward to take the supremum of a sequence of measurable functions. For this reason, many powerful techniques in classical ergodic theory involving maximal functions seem no longer available in the research of noncommutative ergodic results. This difficulty was overcome in Junge-Xu's work (\cite{17}) by using the noncommutative vector valued $L^{p}$-space theory developed by Pisier and Junge (see \cite{23} for the case of hyperfinite von Neumann algebras and \cite{16} for that of general ones). Junge-Xu established noncommutative version of the Dunford-Schwartz maximal ergodic inequality first for non-commutaive $L^{p}$-spaces associated with a semifinite von Neumann algebra, or in other words, tracial $L^{p}$-spaces (Theorems 4.1, \cite{17}). In order to state their result we need some notations as follows.

  Let $M$ be a semifinite von Neumann algebra equipped with a normal semifinite faithful trace $\tau$, and let $L^{p}(M)$ be the associated noncommutative $L^{p}$-space (see, for example, \cite{21} for a detailed definition). The symbol $T$: $M \rightarrow M$ denotes a linear map which satisfies the following conditions.

 $(J1)$ $T$ is a contraction on $M$: $\lgg Tx\rgg  \leq \lgg x\rgg $ for all $x \in M$, where $\lgg \cdot\rgg $ means the $\infty$-norm, or in other words, the usual operator norm.

 $(J2)$ $T$ is positive: $Tx\geq 0$ if $x\geq 0$.

 $(J3)$ $\tau\circ T\leq \tau$: $\tau(T(x))\leq \tau(x)$ for all $x \in L^{1}(M) \cap M_{+}$.

  \begin{theorem}{ \em(Theorem 4.1 in \cite{17})}.\label{sec:introduction-thm1.1}
    Let $T$ be a linear map on a semifinite von Neumann algebra $M$ with (J1)--(J3), then $T$ extends naturally to a contraction on $L^{p}(M)$ for $1\leq p< \infty$. Put $$S_{n}\equiv S_{n}(T)=\frac{1}{n+1}\sum^{n}_{k=0}T^{k},$$
    then for every $p$, $1 < p < \infty$, we have
    \begin{equation*}
      \lgg \sup_{n}{}^{+} S_{n}(x) \rgg _{p} \leq c_{p} \lgg x\rgg _{p}, \text{ for all } x\in L^{p}(M),
    \end{equation*}
    for some positive constant $c_{p}$ depending only on $p$.

  \end{theorem}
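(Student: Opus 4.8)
The plan is to extend $T$ across the whole $L^p$-scale, isolate two endpoint estimates for the Cesàro means $S_n$, and combine them by interpolation on the Pisier--Junge maximal-function spaces. Conditions (J1)--(J3) say that $T$ is a Dunford--Schwartz operator. From positivity (J2) and $\tau\circ T\le\tau$ (J3) one gets $\lgg Tx\rgg_1\le\lgg x\rgg_1$ for $x\in L^1(M)\cap M$ (check it first on positive $x$, where $\lgg Tx\rgg_1=\tau(Tx)\le\tau(x)$, then split a general element into positive and negative parts), so $T$ extends to a contraction of $L^1(M)$. Together with the $M$-contractivity (J1), complex interpolation of the compatible couple $\bigl(L^1(M),M\bigr)$ produces a contraction of $L^p(M)$ for every $1\le p<\infty$, which is the first assertion; I would also record that the trace-adjoint $T^*$ again satisfies (J1)--(J3), a symmetry to be used for small $p$.

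For the maximal inequality, recall that $\lgg\sup_{n}{}^{+}S_n(x)\rgg_p$ is the norm of the sequence $\bigl(S_n(x)\bigr)_{n\ge0}$ in the vector-valued space $L^p(M;\ell_\infty)$. The top endpoint is immediate: each $S_n$ is a convex average of the $M$-contractions $T^k$, so $\sup_n\lgg S_n(x)\rgg_\infty\le\lgg x\rgg_\infty$, which is exactly the $L^\infty(M;\ell_\infty)$ estimate. All the analytic content sits at the bottom endpoint, and this is precisely where the result the present paper extends -- Yeadon's semifinite $L^1$ maximal ergodic inequality -- enters: for $x\in L^1(M)_+$ and $\lam>0$ it produces a projection $e\in M$ with $\tau(1-e)\le C\lam^{-1}\lgg x\rgg_1$ and $\sup_n\lgg eS_n(x)e\rgg_\infty\le\lam$. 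This is a weak-type $(1,1)$ bound for the sublinear map $x\mapsto\bigl(S_n(x)\bigr)_n$ into the weak maximal space $L^{1,\infty}(M;\ell_\infty)$.

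I would then obtain the strong $(p,p)$ inequality for $1<p<\infty$ by a noncommutative Marcinkiewicz argument: real interpolation of the couple $\bigl(L^{1,\infty}(M;\ell_\infty),\,L^\infty(M;\ell_\infty)\bigr)$ should reproduce $L^p(M;\ell_\infty)$, and inserting the two endpoint estimates yields the maximal inequality with a constant $c_p$ depending only on $p$ and degenerating as $p\to1$.

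The difficulty is concentrated in this last step; the endpoints are either trivial or quoted. Because $L^p(M;\ell_\infty)$ is not itself a noncommutative $L^p$-space and the supremum is a genuine order object only on positive sequences, classical Marcinkiewicz interpolation does not apply off the shelf: one must first build the real-interpolation theory of the Pisier--Junge maximal spaces and pin down the correct weak space $L^{1,\infty}(M;\ell_\infty)$ into which Yeadon's distributional estimate actually lands. The delicate points I anticipate are verifying that Yeadon's projection-valued estimate is compatible with the chosen interpolation functor, checking that the interpolated norm is equivalent to the maximal norm $\lgg\sup_{n}{}^{+}\,\cdot\,\rgg_p$, and handling the range $1<p<2$, where one may need to dualize against the associated $\ell_1$-valued space and invoke the invariance of (J1)--(J3) under $T\mapsto T^*$.
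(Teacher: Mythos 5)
First, a point of orientation: the paper does not prove this theorem at all --- it is quoted verbatim from Junge--Xu \cite{17}, and the paper only describes its proof as consisting of two essential parts, namely Yeadon's weak type $(1,1)$ maximal ergodic inequality for semifinite $L^1(M)$ \cite{32} and the noncommutative Marcinkiewicz interpolation theorem of \cite{17}. Your architecture matches that description exactly: the trivial type $(\infty,\infty)$ bound for the Ces\`aro averages, Yeadon's projection-form weak type $(1,1)$ estimate at the bottom endpoint, and an interpolation step to reach $1<p<\infty$. Your endpoint discussion is essentially right, up to one small inaccuracy: splitting into positive and negative parts only gives $L^1$-contractivity of $T$ on self-adjoint elements; for general elements the real/imaginary decomposition yields the constant $2$, and reducing it to $1$ requires the adjoint argument (a positive map on $M$ attains its norm at $\mathbf 1$), which is exactly the device this paper uses for its own map $T_1$.

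The genuine gap is the step you yourself flag as the hard one, and the route you propose for it is the one that is known not to work. You posit that real interpolation of a couple $(L^{1,\infty}(M;\ell_\infty),\,L^\infty(M;\ell_\infty))$ "should reproduce" $L^p(M;\ell_\infty)$; but no such quasi-Banach space $L^{1,\infty}(M;\ell_\infty)$ with the required interpolation property has been constructed, and Junge--Xu explicitly warn (pp.\ 385--386 of \cite{17}, recalled in this paper) that the noncommutative, projection-valued nature of weak type $(1,1)$ estimates makes them a priori unsuitable for classical interpolation arguments; the Pisier--Xu example \cite{25} shows that a verbatim transplant of the real method to von Neumann algebras can produce the wrong spaces. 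What Junge--Xu actually do is prove a bespoke Marcinkiewicz-type theorem (Theorem 3.1 of \cite{17}) by a direct construction: given $x\in L^p_+(M)$, they invoke the weak type $(1,1)$ projections at every dyadic level $\lambda=2^k$, combine the resulting family of projections by hand, and sum the estimates to obtain the type $(p,p)$ bound, with a constant that blows up as $p\to 1$; no abstract interpolation functor appears, and in particular no dualization against the $\ell_1$-valued spaces is needed for the range $1<p<2$. That explicit construction is the analytic heart of the theorem, and your proposal leaves it entirely unsupplied --- indeed, the nonexistence (so far) of the real-interpolation theory you ask for is precisely the obstruction, discussed at the end of this paper, to extending the theorem to non-tracial $L^p$-spaces.
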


  Note that $\lgg \sup_{n}^{+} S_{n}(x) \rgg _{p}$ is the notation for $\lgg \{S_{n}(x)\}_{n \in \mathbb{N}}\rgg _{L^{p}(M;\;l^{\infty})}$ used in \cite{17}, where $L^{p}(M;\;l^{\infty})$ is the Pisier-Junge's noncommutative vector valued $L^{p}$-space defined by the space of all sequences $x=\{x_{n}\}_{n \in \mathbb{N}}$ in $L^{p}(M)$ $(1\leq p\leq \infty)$ such that each sequence admits a factorization of the following form: there are $a, b \in L^{2p}(M)$ and $y=\{y_{n}\}_{n \in   \mathbb{N}}\subset L^{\infty}(M)=M$ such that $x_{n}=ay_{n}b$ for all $n \in \mathbb{N}$ and the norm is defined by $$\lgg x\rgg _{L^{p}(M;\;l^{\infty})}\equiv \inf\{\lgg a\rgg _{2p}\sup_{n \in \mathbb{N}} \lgg y_{n}\rgg \lgg b\rgg _{2p}\},$$ where the infimum runs over all such factorizations as above.

  For details of such spaces, we refer the reader to \cite{23} and \cite{16}.

  But considering only semifinite von Neumann algebras is not enough sometimes. It was stated in \cite{30} that most of factors arising from physics are of type III, which are of course not semifinite. Another fact is that it was shown by Pisier (\cite{24}) that $OH$ cannot completely embed in a semifinite $L^{1}(M)$. More and more recent works concern the type III case or need results on noncommutative $L^{p}$-spaces associated with a not necessarily semifinite von Neumann algebra.

  For a general von Neumann algebra, there are several equivalent constructions of non-commutative $L^{p}$-spaces associated with it, and the important ones include \cite{2,6,10,13,15,18,31} and they are all based on the Tomita-Takesaki theory. Since a general von Neumann algebra does not necessarily admit a normal semifinite faithful trace, any of these (equivalent) constructions are called non-tracial $L^{p}$-spaces.

  The first non-tracial $L^{p}$-spaces are Haagerup's ones, and just for Haagerup's $L^{p}$-spaces, Junge-Xu established the non-commutative Dunford-Schwartz maximal ergodic inequality as a non-tracial extension of Theorem \ref{sec:introduction-thm1.1} above (see Theorem 7.4, \cite{17}). Their method is to use an early result of Haagerup named reduction method to approximate Haagerup's $L^{p}$-spaces by simifinite ones. We state this theorem here.

  \begin{theorem}{ \em(Theorem 7.4 in \cite{17})}.\label{sec:introduction-thm1.2}
    Suppose $M$ is a ($\sigma$-finite) von Neumann algebra with a normal faithful state $\varphi$, let $T$ be a linear map on $M$ satisfying the following properties (H1)--(H4).
    \begin{enumerate}[{(H}1{)}]
    \item $T$ is a contraction on $M$: $\lgg Tx\rgg  \leq \lgg x\rgg $ for all $x \in M$.
    \item $T$ is completely positive.
    \item $\varphi(T(x))\leq \varphi(x)$ for all $x \in M_{+}$.
    \item $T\circ \sigma_{t}^{\varphi}=\sigma_{t}^{\varphi} \circ T $ for all $t\in \mathbb{R}$.
    \end{enumerate}

    Then $T$ extends naturally to a contraction on Haagerup's $L^{p}(M)$ for $1\leq p <\infty$. Put $$S_{n}\equiv S_{n}(T)=\frac{1}{n+1}\sum^{n}_{k=0}T^{k}.$$
Then for every $p$, $1 < p < \infty$, we have
    \begin{equation*}
      \lgg \sup_{n} {}^{+} S_{n}(x) \rgg _{p} \leq c_{p} \lgg x\rgg _{p}, \text{ for any } x\in L^{p}(M),
    \end{equation*}
    for some positive constant $c_{p}$ depending only on $p$.

   \end{theorem}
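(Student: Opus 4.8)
The plan is to deduce the non-tracial inequality from the already-established tracial one (Theorem~\ref{sec:introduction-thm1.1}) by means of Haagerup's reduction method, which approximates the general $\sigma$-finite algebra $M$ by an increasing sequence of finite von Neumann algebras. Concretely, I would let $G=\bigcup_{n\geq1}2^{-n}\ds Z$ be the group of dyadic rationals and form the crossed product $\mc R=M\rtimes_{\sigma^\varphi}G$ relative to the modular automorphism group of $\varphi$. Haagerup's construction equips $\mc R$ with a normal faithful state $\what\varphi$ (the dual state), a normal faithful conditional expectation $\Phi\colon\mc R\to M$, and an increasing sequence of finite von Neumann subalgebras $\mc R_{n}\subset\mc R$, each carrying a normal faithful tracial state $\tau_{n}$ together with a normal faithful conditional expectation $\mc E_{n}\colon\mc R\to\mc R_{n}$, with $\bigcup_{n}\mc R_{n}$ weak-$*$ dense in $\mc R$ and $\mc E_{n}\to\mathrm{id}$ in a suitable sense. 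These expectations induce contractive inclusions of the associated $L^{p}$-spaces and are completely contractive on the vector-valued spaces $L^{p}(\,\cdot\,;l^\infty)$, which is precisely what will make the limiting argument possible.

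The first technical step is to lift $T$ to $\mc R$. Since $T$ commutes with $\sigma_{t}^{\varphi}$ for all $t$ by hypothesis (H4), $T$ extends to a normal map $\wtilde T$ on $\mc R$ acting trivially on the unitaries implementing the $G$-action; I would verify that $\wtilde T$ again satisfies (H1)--(H4) with respect to $\what\varphi$ and that it covers $T$ in the sense $\Phi\circ\wtilde T=T\circ\Phi$. The second step is to compress to the finite level: set $T_{n}=\mc E_{n}\,\wtilde T\,\iota_{n}$, where $\iota_{n}\colon\mc R_{n}\hookrightarrow\mc R$ is the inclusion. Here complete positivity (H2), rather than mere positivity, is essential, because composing with the conditional expectations must preserve positivity, including for the matrix amplifications entering the definition of the $l^\infty$-valued norm. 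One then checks that each $T_{n}$ is an $\infty$-contraction, positive, and $\tau_{n}$-subinvariant, that is, that it satisfies conditions (J1)--(J3) on the finite algebra $\mc R_{n}$.

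With this in place, Theorem~\ref{sec:introduction-thm1.1} applies on each $\mc R_{n}$: for $1<p<\infty$ the averages $S_{k}(T_{n})$ satisfy $\lgg\sup_{k}{}^{+}S_{k}(T_{n})(\xi)\rgg_{p}\leq c_{p}\lgg\xi\rgg_{p}$ for all $\xi\in L^{p}(\mc R_{n})$, with a constant $c_{p}$ independent of $n$. The final step is to transfer this uniform bound back to $M$. Using $\Phi\circ\wtilde T=T\circ\Phi$ and the intertwining of the averages $S_{k}$ with the conditional expectations, I would realize $S_{k}(T)(x)$ for $x\in L^{p}(M)\subset L^{p}(\mc R)$ as a limit in $n$ of images of $S_{k}(T_{n})$, and then invoke the complete contractivity of $\mc E_{n}$ and $\Phi$ on the vector-valued spaces $L^{p}(\,\cdot\,;l^\infty)$, together with the density of $\bigcup_{n}\mc R_{n}$, to pass to the limit and obtain $\lgg\sup_{k}{}^{+}S_{k}(x)\rgg_{p}\leq c_{p}\lgg x\rgg_{p}$ on $L^{p}(M)$.

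The main obstacle I expect lies in this last limiting step inside $L^{p}(M;l^\infty)$. Because the ``supremum'' here is not a genuine pointwise supremum but is encoded through the factorization norm $x_{k}=a\,y_{k}\,b$, one cannot simply take limits of suprema; instead one must establish that the $L^{p}(\,\cdot\,;l^\infty)$-norm is lower semicontinuous under the approximation $\mc E_{n}\to\mathrm{id}$ and stable under the normal conditional expectation $\Phi$, and that the factorizations can be chosen compatibly across the levels $\mc R_{n}$. Verifying that $\wtilde T$ and the compressions $T_{n}$ genuinely satisfy (J1)--(J3) \emph{uniformly} in $n$ --- in particular the trace-subinvariance $\tau_{n}\circ T_{n}\leq\tau_{n}$, which must be deduced from the state-subinvariance (H3) through the perturbation relating $\what\varphi|_{\mc R_{n}}$ to $\tau_{n}$ --- is the other delicate point, and is where hypotheses (H2)--(H4) are used in full force.
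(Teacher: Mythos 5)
Your proposal is correct and coincides with the proof the paper relies on: the paper does not prove this statement itself but quotes it from Junge--Xu \cite{17} (Theorem 7.4, also Theorem 7.9 in \cite{11}), where it is established exactly by the Haagerup reduction method you describe. Your outline --- crossed product $M\rtimes_{\sigma^\varphi}G$ over the dyadic rationals, lifting $T$ using (H4) and (H2), restriction/compression to the increasing finite subalgebras $\mc R_n$ so that (J1)--(J3) hold there, uniform application of Theorem \ref{sec:introduction-thm1.1}, and passage to the limit through the compatible embeddings of the vector-valued spaces $L^p(\,\cdot\,;l^\infty)$ --- is the same argument, including the two delicate points (trace subinvariance of $T_n$ and the limiting step in the factorization norm) that the cited proof handles explicitly.
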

  This theorem was also established in \cite{11} (Theorem 7.9).

  We may compare the statement of this theorem with that of Theorem \ref{sec:introduction-thm1.1}. It is clear that the assumption $(J2)$ is reinforced to the assumption $(H2)$, and $(H4)$ is new, the others are natural extensions. Since the modular automorphism group $\sigma_{t}^{\tau}$ induced by a trace $\tau$ is trivial, we do not think the condition $(H4)$ is unnatural any more when we consider the non-tracial cases. However, we feel that the ``complete positivity'' assumption of $(H2)$ seems a little bit stronger. Although it is remarked in \cite{17}, page 425, that the complete positivity is unlikely to be really necessary, they did not give a proof to release this assumption, and they still used complete positivity assumption in \cite{11}.

  We would like to remark that Haagerup's reduction method used in \cite{11} is really powerful to extend the noncommutative martingale inequalities from the tracial cases to the non-tracial ones without any change in the statement form for almost all those results. This is because a conditional expectation in the framework of von Neumann algebras is certainly completely positive (\cite{1} and \cite{T}). In noncommutative ergodic theorems, ``complete positivity'' seems a little restrictive, moreover, many important positive but not completely positive state-preserving transforms describing open quantum evolution are now considered by mathematical physicists (see, for example, \cite{26,27,14,5,29}).

  So we do not think Theorem \ref{sec:introduction-thm1.2} is a complete answer for the problem of non-tracial extension of Theorem \ref{sec:introduction-thm1.1}. Recalling the proof of Theorem \ref{sec:introduction-thm1.1} in \cite{17}, we know that this proof consists of two essential parts, one is the noncommutative maximal ergodic inequality for semifinite $L^1(M)$ established by Yeadon, and the other one is the noncommutative semifinite Marcinkiewicz interpolation theorem in \cite{17}. A natural and possible way of thought for the problem of a complete non-tracial extension of Theorem \ref{sec:introduction-thm1.1} is to get non-tracial extensions of these two parts. The main result of our paper is just to provide the non-tracial extension of maximal ergodic inequality for the non-tracial $L^1(M)$. We have been unable to get a noncommutative Marcinkiewicz interpolation theorem for the non-tracial $L^p$-spaces so far and we will explain the reason in the next section. We hope the result in this paper will be helpful to establish a complete noncommutative maximal ergodic inequality for the non-tracial $L^p$-spaces in the future.

  It is well-known that all the von Neumann algebras encountered in quantum statistical mechanics and quantum field theory are $\sigma$-finite (\cite{4}, p.84), probably for this reason, only the noncommutative $L^p$-spaces associated with a $\sigma$-finite von Neumann algebra are considered in Theorem \ref{sec:introduction-thm1.2} instead of general non-tracial situation and we think this restriction to the $\sigma$-finite case is of full meaning. Our framework is the Hilsum spatial $L^{p}$-spaces associated with a $\sigma$-finite von Neumann algebra which are certainly isometrically isomorphic to Haagerup's ones associated with a $\sigma$-finite von Neumann algebra. On the detailed construction of the Hilsum spatial $L^{p}$-spaces and the close relation between those spaces and complex interpolation theory, we will give a short description on them in the beginning of the next section.

  Let $M$ be a $\sigma$-finite von Neumann algebra represented on a Hilbert space $H$, hence we may assume $M$ admits a normal faithful state $\varphi$, and $\psi$ is a normal faithful state on the commutant $M^{\prime}$ of $M$, let $L^{p}(M ; \psi)$ denote the Hilsum spatial $L^{p}$-spaces with respect to $\psi$. Our main result is the following theorem.

  \begin{theorem} {\em(Theorem \ref{sec:main-part-11} in the next section)}.
    Assume $M$ is a $\sigma$-finite von Neumann algebra which admits a normal faithful state $\varphi$ and $T$ is a linear map on $M$ satisfying the following conditions.

    $(1)$ $T$ is a contraction of $M$, i.e., $\lgg Tx\rgg\leq \lgg x\rgg$ for all $x\in M$.

    $(2)$ $T$ is positive, i.e., $Ty\geq 0$ if $y\in M_+$.

    $(3)$ $\varphi(T(y))\leq \varphi(y)$, for all $y\in M_+$,

    Then $T$ extends to a positive linear contraction on $L^1(M; \psi)$. For any $a \in L^{1}_{+}(M ; \psi)$ and any $\lambda > 0$, and any $n\in \mathbb N$, there exists a projection $e_n \in M$ such that
    \begin{align*}
    e_nS_r(a)e_n\leq \lam e_nde_n \text{ for all }r\in \{0,1,...,n\},
    \end{align*}
    and
    \begin{align*}
    \varphi(\mathbf1-e_n)\leq \frac2{\lam}\int ad\psi,
    \end{align*}
    where $S_r(a)=\frac1{r+1}\sum_{k=0}^rT^k(a)$, $d=\frac{d\varphi}{d\psi}$ is the spatial derivative and $\mathbf 1$ is the identity of $M$.
    Furthermore, for any $a \in L^{1}_{+}(M ; \psi)$, there exists a projection $e\in M$ such that for any $r\in \mathbb N$,
    \begin{align*}
    \int e S_r(a)e d\psi \leq 4\lambda, \text{ and } \varphi(\mathbf 1-e)\leq \frac2{\lam}\int ad\psi.
    \end{align*}
  \end{theorem}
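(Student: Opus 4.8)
The plan is to reduce the whole statement to the tracial Hopf maximal construction of Yeadon, executed inside the spatial $L^{1}$-space, whose distinguished functional $\int\cdot\,d\psi$ behaves as a cyclic trace while the state $\varphi$ is recovered from the spatial derivative through $\int x\,d\,d\psi=\varphi(x)$ for $x\in M$. First I would build the extension of $T$ to $L^{1}(M;\psi)$ by duality: since $(1)$–$(2)$ give $0\le T\mathbf 1\le\mathbf 1$, the map $T$ is normal and has a preadjoint $T_{*}$ on $M_{*}\cong L^{1}(M;\psi)$; positivity of $T$ makes $T_{*}$ positive, and for $a\ge0$ one has $\int T_{*}a\,d\psi=\langle T\mathbf 1,a\rangle\le\langle\mathbf 1,a\rangle=\int a\,d\psi$, so $T_{*}$ (henceforth again written $T$) is the asserted positive contraction. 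The decisive point is that condition $(3)$ becomes a clean operator inequality: pairing against $d$ and moving $T$ across the duality gives $\int y\,T(d)\,d\psi=\varphi(Ty)\le\varphi(y)=\int y\,d\,d\psi$ for all $y\in M_{+}$, whence $T(d)\le d$ in $L^{1}_{+}(M;\psi)$, and by positivity $\sum_{k=0}^{r}T^{k}(d)\le(r+1)d$.

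With this in hand the non-tracial features disappear from the heart of the argument. Put $f=a-\lam d\in L^{1}(M;\psi)_{\mathrm{sa}}$ and $B_{r}=\sum_{k=0}^{r}T^{k}f$. Using $\sum_{k\le r}T^{k}(d)\le(r+1)d$ and the fact that compression by a projection is positive, the single inequality $e_{n}B_{r}e_{n}\le0$ already forces
\begin{equation*}
e_{n}S_{r}(a)e_{n}=\tfrac{1}{r+1}e_{n}\Big(\sum_{k=0}^{r}T^{k}a\Big)e_{n}\le\tfrac{\lam}{r+1}e_{n}\Big(\sum_{k=0}^{r}T^{k}d\Big)e_{n}\le\lam\,e_{n}de_{n}.
\end{equation*}
So it suffices to produce a projection $e_{n}$ with $e_{n}B_{r}e_{n}\le0$ for every $r\le n$, at a cost in $\varphi$-measure that is independent of $n$. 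Taking the bad projection $c_{n}=\bigvee_{r=0}^{n}\chi_{(0,\infty)}(B_{r})\in M$ (spectral projections of $L^{1}$-selfadjoints are affiliated to $M$, hence lie in $M$) and $e_{n}=\mathbf 1-c_{n}$, the inclusion $e_{n}\le\chi_{(-\infty,0]}(B_{r})$ yields $e_{n}B_{r}e_{n}=-e_{n}(B_{r})_{-}e_{n}\le0$ immediately; everything therefore rests on the measure estimate $\varphi(c_{n})\le\frac{2}{\lam}\int a\,d\psi$.

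This last estimate is the noncommutative Hopf maximal ergodic inequality and is the one genuinely hard step. I would follow Yeadon's semifinite argument, which transfers here because it uses only (i) that spectral projections live in $M$, (ii) cyclicity of $\int\cdot\,d\psi$, (iii) the $L^{1}$-contraction $\int Tg\,d\psi\le\int g\,d\psi$, and (iv) the identity $\int e\,d\,d\psi=\varphi(e)$. Concretely one builds a positive $L^{1}$-majorant of the family $\{B_{r}\}$ by the positive-part recursion $V_{r}=(f+TV_{r-1})_{+}$ with $V_{-1}=0$, for which $V_{r}\ge B_{r}$ and $V_{r}\ge0$; imitating the classical chain $V_{n}\le f+TV_{n}$ on the support of $V_{n}$ and telescoping through (iii) emulates the commutative bound $\int_{\{\max_{r}B_{r}>0\}}f\ge0$ and is meant to give $\lam\,\varphi(c_{n})\le2\int a\,d\psi$. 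The main obstacle, exactly as in the semifinite theory, is that there is no genuine supremum of the operators $B_{r}$: the pointwise maximum of the commutative proof must be replaced by this recursively defined majorant together with a careful bookkeeping of the supports $\chi_{(0,\infty)}(B_{r})$, and it is precisely this surrogate that costs the factor $2$ in place of the sharp commutative constant $1$. Notably the non-tracial novelties—the non-commutation of the averages with $d$ and the absence of a trace on $M$—never surface here, having been absorbed once and for all into $f=a-\lam d$ and into reading the measure through $\int e\,d\,d\psi=\varphi(e)$.

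Finally, for the uniform assertion I would let $n\to\infty$. Since the bad projections $c_{n}$ increase with $n$ while $\varphi(c_{n})\le\frac{2}{\lam}\int a\,d\psi$ stays bounded, the projection $c=\bigvee_{n}c_{n}$ satisfies $\varphi(c)=\sup_{n}\varphi(c_{n})\le\frac{2}{\lam}\int a\,d\psi$, and $e=\mathbf 1-c$ obeys $e\le e_{n}$ for every $n$; hence $eS_{r}(a)e\le\lam\,ede$ for all $r$, and applying $\int\cdot\,d\psi$ with cyclicity and $\int ede\,d\psi=\varphi(e)\le1$ gives $\int eS_{r}(a)e\,d\psi\le\lam\le4\lam$ for every $r$, with room to spare for the stated constant. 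The routine points I have suppressed—normality of $T$, that $L^{1}(M;\psi)=M_{*}$ carries the cyclic functional $\int\cdot\,d\psi$ with $\int x\,d\,d\psi=\varphi(x)$, and the measurability needed to form $\chi_{(0,\infty)}(B_{r})\in M$—are standard in the Hilsum spatial calculus recalled at the beginning of the next section.
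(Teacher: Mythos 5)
Your reduction step is fine: granting a projection $e_n\in M$ with $e_nB_re_n\le 0$ for $B_r=\sum_{k=0}^{r}T^k(a-\lambda d)$, the inequality $e_nS_r(a)e_n\le\lambda e_nde_n$ does follow, since $T(d)\le d$ holds (for the paper's extension one sees this even more directly: $T_1(d)=d^{1/2}T(\mathbf 1)d^{1/2}\le d$). The fatal problem is the construction of $e_n$ itself. You set $c_n=\bigvee_{r\le n}\chi_{(0,\infty)}(B_r)$ and assert that spectral projections of self-adjoint elements of $L^1(M;\psi)$ lie in $M$. In the non-tracial case this is exactly what fails: elements of $L^1(M;\psi)$ are $(-1)$-homogeneous with respect to $\psi$ and are \emph{never} affiliated with $M$ (for instance $d=\frac{d\varphi}{d\psi}$ implements the modular automorphism group of $\psi$ on $M'$, so it cannot commute with $M'$ unless that group is trivial, i.e.\ unless we are back in the tracial situation). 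Hence $\chi_{(0,\infty)}(B_r)$ is a projection on $H$ lying outside $M$ in general, $\varphi(c_n)$ is not even defined, and the Hopf-type recursion $V_r=(f+TV_{r-1})_+$ ``on the support of $V_n$'' has no meaning inside $M$. This is precisely the obstruction the paper highlights when explaining why Yeadon's cutting-by-projections cannot be transplanted verbatim, and it is why the step you yourself flag as the ``genuinely hard'' one (where the telescoping ``is meant to give'' the measure estimate) cannot be completed along these lines. A symptom that something is off: your construction, if valid, would yield a single projection $e$ with the operator inequality $eS_r(a)e\le\lambda ede$ for \emph{all} $r$ and constant $\lambda$ rather than $4\lambda$, strictly stronger than what the theorem deliberately claims.

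The paper circumvents this by never taking spectral projections of $L^1$ elements. For fixed $n$ it maximizes the $\sigma$-weakly continuous functional $g((x_r))=\sum_{r}(r+1)\int S_r(a)x_r\,d\psi-\lambda\sum_{r}(r+1)\int d^{1/2}x_rd^{1/2}\,d\psi$ over the $\sigma$-weakly compact set $K=\{(x_0,\dots,x_n): x_r\in M_+,\ \sum_r x_r\le\mathbf 1\}$; perturbing a maximizer $(\overline{x_r})$ first by elements $x\le\mathbf 1-\sum_r\overline{x_r}$ and then by the shifted tuple $(\widetilde T(\overline{x_{r+1}}))$, with $\widetilde T$ the adjoint of the $L^1$-extension, gives the two key inequalities, and $e_n$ is obtained as $\mathbf 1-p_0$ from the spectral decomposition of $\mathbf 1-\sum_r\overline{x_r}$, which is an element of $M$, so its spectral projections legitimately lie in $M$. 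The uniform statement then comes from a weak-operator cluster point $h$ of the $e_n$ (not a projection, whence the cut-off $e=\mathbf 1-e_{1/2}$, the auxiliary $g$ with $\lVert g\rVert\le 2$, and the factor $4$). A secondary error: your justification for the preadjoint (``(1)--(2) give $0\le T\mathbf 1\le\mathbf 1$, so $T$ is normal'') is wrong, since positivity plus contractivity do not imply $\sigma$-weak continuity; normality here would instead follow from (2), (3) and the normality and faithfulness of $\varphi$. Moreover the preadjoint $T_*$ is the \emph{adjoint} of $T$, not the extension $T_1(d^{1/2}xd^{1/2})=d^{1/2}T(x)d^{1/2}$ that the theorem refers to; the two coincide only under additional modular symmetry of $T$.
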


 We recall that in the semifinite case, the $L^{1}$-norm of the tracial $L^{1}$-space $L^{1}(M)$ is defined by $\lgg \cdot\rgg _{1}=\tau (|\cdot|)$, and in the spatial $L^{1}$-space $L^{1}(M ; \psi)$, the $L^{1}$-norm is defined by $\lgg \cdot\rgg _{1}=(\int |\cdot| d\psi)$, and we know that when $M$ is semifinite, $L^{1}(M ; \psi)$ is equivalent to the tracial $L^{1}(M)$, so this is the reason we use $\int \cdot d\psi$ in the statement of our theorem. In the semifinite case, Yeadon's result (\cite{32}) provided a bound on $eS_r(a)e$, and this is because in semifinite case, the elements in $L^p$-spaces are unbounded (also including bounded) operators affiliated with the von Neumann algebra, hence we may cut such unbounded operators by using some projection in the von Neumann algebra. However, for the non-tracial case, elements in the Hilsum (or other equivalent) $L^p$-spaces are never affiliated with the von Neumann algebra $M$, so $eS_r(a)e$ cannot be majorized by any bounded operator, and the result in our theorem is the best one we can hope in the non-tracial case.

 Such a result also has its own value. In the commutative case we know that the following Chebyshev type inequality is quite important in measure theory and probability theory,
 \begin{align*}
 \mu(\{x:\sup_{n \in \mathbf N}s_n(f)(x)>\lambda\})\leq \frac{c}{\lambda}\lgg f\rgg _1,
 \end{align*}
 and our result is a non-commutative extension of this inequality for $\sigma$-finite von Neumann algebras.

  \section{Main Part}
  \indent\quad Now we introduce the definition of the Hilsum spatial $L^p$-spaces. We assume $M$ is a general von Neumann algebra (because the Hilsum $L^p$-spaces are defined for general von Neumann algebras) which admits a normal semifinite faithful weight $\varphi$ and furthermore, $M$ is represented on a Hilbert space $H$ and we have a normal semifinite faithful weight $\psi$ on the commutant $M^{\prime}$ of $M$.

  A vector $\xi \in H$ is said to be $\psi$-bounded if there exists a positive constant $c$ such that $\lgg y\xi\rgg ^{2} \leq c\psi(y^{\ast}y)$ for any $y \in n_{\psi}$, where $n_{\psi}=\{y\in M^{\prime}\ |\ \psi(y^{\ast}y)< \infty\}$. We let $D(H, \psi)$ be the subspace of $H$ consisting of all $\psi$-bounded vectors. Then for any $\xi \in D(H, \psi)$, $R^{\psi}(\xi)$ is the unique bounded operator from $H_{\psi}$ (the GNS representation Hilbert space induced by $\psi$) to $H$ such that $R^{\psi}(\xi)\eta_{\psi}(y)=y\xi$, where $\eta_{\psi}$ is the canonical injection of $n_{\psi}$ into $H_{\psi}$, and $\theta^{\psi}(\xi,\xi)=R^{\psi}(\xi)R^{\psi}(\xi)^{\ast}\in M$. The map $\xi \rightarrow \phi (\theta^{\psi}(\xi, \xi))$ defines a lower semicontinuous positive form on $D(H, \psi)$, where $\phi$ is any normal semifinite weight on $M$. Then the positive self-adjoint operator associated with this form is called the spatial derivative $\frac{d\phi}{d\psi}$ defined by Connes (\cite{6}).

  The Hilsum spatial $L^{p}$-space $L^{p}(M ; \psi)$ is defined as $(1 \leq p < \infty)$
  \begin{align*}
    L^{p}(M; \psi) &=\left\{\begin{array}{l} a \text{ is a closed
      densely defined operator on } H \text{
      with}\\
    \text{polar decomposition } a=u|a| \text{ such that }u\in M \\
    \text{ and } \left| a\right|^p=\frac{d\phi}{d\psi}\text{ for some
    }\phi\in M^+_*
    \end{array}\right\}\\
    &=\left\{\begin{array}{l} a \text{ is a closed densely defined
      operator on } H \text{ and}\\
    (-\frac{1}{p}) \text{-homogeneous with respect to } \psi \text{ such
      that }\\
    \int|a|^{p}d\psi < \infty
    \end{array}\right\}
  \end{align*}

  The $L^{p}$-norm is defined by $\lgg \cdot\rgg _{p}=(\int |\cdot|^{p} d\psi)^{\frac{1}{p}}$. If $p= \infty$, $L^{\infty}(M; \psi)= M$ with the usual operator norm.

  For the definition of homogeneity with respect to $\psi$ and the detailed properties of $\frac{d\phi}{d\psi}$ and $L^{p}(M ; \psi)$, we refer the reader to \cite{6,13,31}.

  Concerning the Hilsum spatial $L^{p}$-spaces, Terp's paper (\cite{31}) deeply revealed the close relation between the complex interpolation theory on $(M, M_{\ast})$ and $L^{p}(M;\psi)$.

  The subspace $L$ of $M$ consists of $x \in M$ for which there exists a $\varphi_{x}\in M_{\ast}$ such that $$\text{ for any } y, z \in n_{\varphi}: (z^{\ast}y, \varphi_{x})= \langle J\pi_{\varphi}(x)^{\ast}J\eta_{\varphi}(y), \eta_{\varphi}(z)\rangle,$$ where $J$ is the associated modular conjugation in $H_{\varphi}$, $(\cdot, \cdot)$ denotes the duality between $M$ and $M_{\ast}$, and $\langle\ , \rangle$ is the scalar inner product on $H_{\varphi}$ .

  For $x \in L$, we put $\lgg x\rgg _{L}=\max\{\lgg x\rgg , \lgg \varphi_{x}\rgg \}$, where the norm for $\varphi_{x}$ means the functional norm.

  The normed space $(L, \lgg \cdot\rgg _{L})$ is a Banach space and it can be embedded naturally into $M$ and $M_{\ast}$ by $x\mapsto x: L\rightarrow M$ and $x \mapsto \varphi_{x} : L\rightarrow M_{\ast}$.

  By transposition of the above two embeddings we have the injections $M\rightarrow L^{\ast}$ and $M_{\ast}\rightarrow L^{\ast}$ given by $$(x, y)_{(L, L^{\ast})}=(y, \varphi_{x})_{(M, M_{\ast})}, \; x\in L$$ for all $y \in M$ and $$(x, \phi)_{(L, L^{\ast})}=(x, \phi)_{(M, M_{\ast})}, \; x\in L$$ for all $\phi \in M_{\ast}$, where $L^{\ast}$ means the dual of $(L, \lgg \cdot\rgg _{L})$.

  The following diagram commutes,
  \[\xymatrix{  &{M}\ar[rd] \\
    {L}\ar[rd]\ar[ru]&&{L^{\ast}}\\
    &{M_{\ast}}\ar[ru]
  }\]
  and $L=M \cap M_{\ast}$ when $M$ and $M_{\ast}$ are considered as subspaces of $L^{\ast}$ (see Section 1, \cite{31}), and $L$ is $\sigma$-weakly dense in $M$, the embedding of $L$ in $M_*$ is weakly and norm dense in $M_*$ (Corollary 5, \cite{31}).

  Hence $(M, M_{\ast})$ is turned into a compatible pair of Banach spaces in the complex interpolation sense (Section 2.3, \cite{3}), and Terp proved that for $1<p<\infty$, the Hilsum spatial $L^{p}$-spaces $L^{p}(M; \psi)$ are just the complex interpolation spaces of $M_{\ast}$ and $M$. Accurately speaking, $L^{p}(M; \psi)=C_{\frac1p}(M, M_{\ast})$ (Theorem 36 in \cite{31}).

  Let $T$ be a linear map on $M$ satisfying the following conditions.

$(1)$ $T$ is a contraction of $M$, i.e., $\lgg Tx\rgg\leq \lgg x\rgg$ for all $x\in M$.

$(2)$ $T$ is positive, i.e., $Ty\geq 0$ if $y\in M_+$.

$(3)$ $\varphi(T(y))\leq \varphi(y)$ for all $y \in L_+$.

From now on, we will concentrate on the $\sigma$-finite cases.

Let $M$ be a $\sigma$-finite von Neumann algebra acting standardly on the Hilbert space $H$, $M$ admits a normal faithful state $\varphi$, $\psi$ is a normal faithful state on $M^\prime$, and $T$ is a linear transform satisfying the above conditions $(1)-(3)$.

We note that for any $x\in M$, we have $x\in m_\varphi =span\{x\in M_+\ |\ \varphi(x)<\infty\}$ as $\varphi$ is a state, and we know from Note (2), p.329 in \cite{31} that $m_\varphi \subseteq L$, combined with the fact that $L$ is defined to be a linear subspace of $M$, we have $m_\varphi=M=L$ in the $\sigma$-finite cases.

That is to say, condition $(3)$ above can be replaced by the following one when $M$ is $\sigma$-finite.

$(3)$ $\varphi(T(y))\leq \varphi(y)$ for all $y \in M_+$.

Moreover, the embedding of $M=L$ into $L^1(M; \psi)$ is given by the map $M \rightarrow d^{\frac12}Md^{\frac12}$, where $d=\frac{d\varphi}{d\psi}$ is the spatial derivative of $\varphi$ with respect to $\psi$ (see Section 2.3 and Theorem 27 in \cite{31}). Such an embedding is equivalent to the embedding $M\rightarrow M^\eta \subseteq M_*$ with $\eta=\frac12$ (the symmetric case) in Definition 7.1 of \cite{18} and $L^p(M; \psi)$ is equivalent to Kosaki's $L^p$-spaces $C_{\frac1p}(M^\eta, M_*)$ with $\eta=\frac12$ (Definition 7.2, \cite{18}). Moreover, since the Hilbert space is standard, the Hilsum space $L^p(M;\psi)$ is now as the same as the Araki-Masuda $L^p$-space in \cite{2}.

  Then for a general $p$, $1\leq p <\infty$, we may define the following map $T_p$ as
  \begin{align*}
    T_p:d^{\frac1{2p}}Md^{\frac1{2p}}&\rightarrow d^{\frac1{2p}}M\dtp\\
    d^{\frac1{2p}}xd^{\frac1{2p}}&\mapsto d^{\frac1{2p}}T(x)\dtp,
  \end{align*}
  for any $x\in M$.

 We claim that the map $T_1$ defined above extends naturally to a positive contraction of $L^1(M;\psi)\rightarrow L^1(M;\psi)$, and it will still be denoted by $T_1$. To show this claim, we need the Lemma 5.2 in \cite{11}. Although this lemma is stated and proved for Haagerup's $L^1$-spaces, it is still valid in the framework of the Hilsum spatial $L^1$-spaces through isometric isomorphism. We state it here for $L^1(M;\psi)$. Let $x\in M$ and $x$ is self-adjoint, then $$\lgg d^{\frac12}xd^{\frac12}\rgg_1=\inf\{\varphi(a)+\varphi(b)\ |\ x=a-b, a,b\in M_+\}.$$
 The proof of this result for $L^1(M; \psi)$ is essentially the same as that of Lemma 5.2 in \cite{11} when replacing $D$ there by the spatial derivative $d$. Then we will use this result and follow the method of Lemma 5.3 in \cite{11} to show our claim.
 Let $y\in M_+$, then $d^{\frac12}yd^{\frac12}\geq 0$, so $T_1(d^{\frac12}yd^{\frac12})=d^{\frac12}T(y)d^{\frac12}\geq 0$, hence $T_1$ is also positive. By condition $(3)$,
 \begin{align*}
 &\lgg d^{\frac12}T(y)d^{\frac12}\rgg_1=\int d^{\frac12}T(y)d^{\frac12}d\psi =\int T(y)dd\psi\\
 =&\varphi(T(y))\leq \varphi(y)=\lgg d^{\frac12}yd^{\frac12}\rgg_1.
 \end{align*}
 Now assume $x\in M$ and $x$ is self-adjoint, for any $\e>0$, there exist $a,b\in M_+$ such that $x=a-b$ and  $$\lgg d^{\frac12}ad^{\frac12}\rgg_1+\lgg d^{\frac12}bd^{\frac12}\rgg_1\leq\lgg d^{\frac12}xd^{\frac12}\rgg_1+\e .$$
 It follows that $$\lgg T_1(d^{\frac12}xd^{\frac12})\rgg_1\leq \lgg d^{\frac12}T(a)d^{\frac12}\rgg_1+\lgg d^{\frac12}T(b)d^{\frac12}\rgg_1\leq \lgg d^{\frac12}xd^{\frac12}\rgg_1 +\e .$$
 That is $\lgg T_1(d^{\frac12}xd^{\frac12})\rgg_1\leq\lgg d^{\frac12}xd^{\frac12}\rgg_1$ for any $x\in M$ is self-adjoint since $\e$ is arbitrary. Finally, decomposing any $x\in M$ into its real and imaginary parts, we get $\lgg T_1(d^{\frac12}xd^{\frac12})\rgg_1\leq 2\lgg d^{\frac12}xd^{\frac12}\rgg_1$. And since $d^{\frac12}Md^{\frac12}$ is norm dense in $L^1(M;\psi)$, $T_1$ extends to a bounded positive map on $L^1(M;\psi)$ with $\lgg T_1\rgg \leq 2$. Thus it remains to reinforce the norm bound $2$ to $1$. To this end, we consider the adjoint map $T_1^*$ which is a linear map on $M=L^1(M;\psi)^*$. Since $T_1$ is positive, $T_1^*$ is also positive and $T_1^*$ attains its norm at the identity $\mathbf1$ of $M$, i.e., $\lgg T_1\rgg=\lgg T_1^*\rgg=\lgg T_1^*(\mathbf1)\rgg$. Hence we are reduced to showing $\lgg T_1^*(\mathbf1)\rgg\leq 1$. Indeed, by condition $(3)$,
 $$\int T_1^*(\mathbf1)d^{\frac12}yd^{\frac12}d\psi=\int T_1(d^{\frac12}yd^{\frac12})d\psi=\varphi(T(y))\leq \varphi(y)=\lgg d^{\frac12}yd^{\frac12}\rgg_1$$ for any $y\in M_+$. We get $\lgg T_1^*(\mathbf1)\rgg\leq 1$ by the density of $d^{\frac12}M_+d^{\frac12}$ in $L^1_+(M; \psi)$, and hence $\lgg d^{\frac12}T(x)d^{\frac12}\rgg_1\leq \lgg d^{\frac12}xd^{\frac12}\rgg_1$ for any $x\in M$ and our claim follows.

  Combined with condition $(1)$ and the abstract Riesz-Thorin Theorem, we obtain that
  \begin{align*}
    \lgg \dtp T(x)\dtp\rgg_p \leq \lgg \dtp x\dtp\rgg_p, \hspace{1cm}\text{for }1<p<\infty, \text{ and } x\in M.
  \end{align*}
  Moreover, since for $1<p<\infty$, $L^p(M;\psi)=C_{\frac1p}(M_*,M)$ (Theorem 36 in \cite{31}, or Kosaki's noncommutative interpolation theorem in \cite{18}), in accordance with Theorem 4.2.2(a) in \cite{3}, we have that $\dtp M\dtp$ is $\lgg\cdot\rgg_p$-norm dense in $L^p(M;\psi)$.  Therefore, the map $T_p$ defined above also extends naturally to a positive contraction of $L^p(M;\psi)\to L^p(M;\psi)$  for each $p$ and we still denote it by $T_p$ $(1<p<\infty)$.  From the idea in \cite{31}, $M=L^\infty(M;\psi), M_*=L^1(M;\psi)$ and $L^p(M;\psi)$, $1<p<\infty$, can be regarded as injective subspaces in $M+M_*$ (factually it is just $M_*$ for the $\sigma$-finite cases, see Definition 7.2 in \cite{18}). In this situation it is easily seen that the maps $T,T_p\ (1<p<\infty)$ and $T_1$ defined above coincide on $L$.  For this reason, we may have a linear map on $M+M_*$, and the restriction of this map on $M$, $L^p(M;\psi)\ (1<p<\infty)$ and $M_*$ will be $T$, $T_p\ (1<p<\infty)$ and $T_1$, respectively when considering $M$, $L^p(M;\psi)\ (1<p<\infty)$ and $M_*$ in $M+M_*$.  Since this map on $M+M_*$ is deduced by $T$ on $M$, it is viewed as the extension of $T$ on $M+M_*$, and we still denote it by $T$.

  Then we arrive at the stage to show our main result. From above we know that the restriction of $T$ on $L^1(M;\psi)$ satisfies the following conditions.

 $\bullet$ \ $\lgg T(a)\rgg_1\leq \lgg a\rgg_1$, for all $a\in L_+^1(M;\psi)$.

 $\bullet$ \ $T$ is positive, i.e., $T(a)\geq 0$ if $a\in L^1_+(M;\psi)$.

  \begin{theorem}\label{sec:main-part-11}
   We assume $T$ is a linear transform on a $\sigma$-finite von Neumann algebra $M$ satisfying the conditions $(1)-(3)$ above, and $T$ extends to a linear positive contraction on $L^1(M;\psi)$.  Then for any $a \in L^{1}_{+}(M ; \psi)$ and any $\lambda > 0$, and any $n\in \mathbb N$, there exists a projection $e_n \in M$ such that
    \begin{align*}
    e_nS_r(a)e_n\leq \lam e_nde_n \text{ for all }r\in \{0,1,...,n\},
    \end{align*}
    and
    \begin{align*}
    \varphi(\mathbf 1-e_n)\leq \frac2{\lam}\int ad\psi,
    \end{align*}
    where $S_r(a)=\frac1{r+1}\sum_{k=0}^rT^k(a)$, $d=\frac{d\varphi}{d\psi}$ is the spatial derivative and $\mathbf 1$ is the identity of $M$.
    Furthermore, for any $a \in L^{1}_{+}(M ; \psi)$, there exists a projection $e\in M$ such that for any $r\in \mathbb N$,
    \begin{align*}
    \int e S_r(a)e d\psi \leq 4\lambda, \text{ and } \varphi(\mathbf 1-e)\leq \frac2{\lam}\int ad\psi.
    \end{align*}
  \end{theorem}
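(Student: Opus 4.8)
The plan is to transcribe Yeadon's semifinite argument (\cite{32}) into the spatial $L^{1}$-setting, with the fixed density $d=\frac{d\varphi}{d\psi}$ playing the role that the unit $\mathbf 1$ and the trace $\tau$ play in the tracial case. Three preliminary facts drive everything. First, $d\in L^{1}_{+}(M;\psi)$ with $\int d\,d\psi=\varphi(\mathbf 1)=1$ and $\int x d\,d\psi=\varphi(x)$ for $x\in M$, so that $\varphi(\mathbf 1-e)=\int(\mathbf 1-e)d\,d\psi$ measures the ``size'' of a complementary projection. Second, the reference density is super-invariant: from $T(\mathbf 1)\le\mathbf 1$ and the identity $T_{1}(d^{\frac12}xd^{\frac12})=d^{\frac12}T(x)d^{\frac12}$ established above, one gets $T_{1}(d)=d^{\frac12}T(\mathbf 1)d^{\frac12}\le d$, and by induction $T^{k}(d)\le d$ for all $k$. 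Third, $\int a\,d\psi=\lgg a\rgg_{1}$ for $a\in L^{1}_{+}(M;\psi)$.

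First I would reduce the target operator inequality to a Hopf-type statement. Writing $A_{r}=\sum_{k=0}^{r}T^{k}(a)=(r+1)S_{r}(a)$ and $G_{r}=\sum_{k=0}^{r}T^{k}(a-\lam d)$, the super-invariance $\sum_{k=0}^{r}T^{k}(d)\le(r+1)d$ gives the crucial domination $A_{r}-\lam(r+1)d\le G_{r}$. Hence for any projection $e$ one has $e(A_{r}-\lam(r+1)d)e\le eG_{r}e$, so it suffices to produce a single $e_{n}$ with $e_{n}G_{r}e_{n}\le 0$ for all $r\in\{0,\dots,n\}$; dividing by $r+1$ then yields $e_{n}S_{r}(a)e_{n}\le\lam e_{n}de_{n}$. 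The partial sums obey the recursion $G_{r}=(a-\lam d)+T(G_{r-1})$, which is exactly the structure Hopf's maximal ergodic lemma exploits in the commutative case.

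The core step, and the main obstacle, is the noncommutative Hopf maximal ergodic lemma: constructing $e_{n}$ so that $e_{n}G_{r}e_{n}\le 0$ simultaneously for $0\le r\le n$, together with the weak-type bound $\varphi(\mathbf 1-e_{n})\le\frac{2}{\lam}\int a\,d\psi$. In the commutative case one simply takes $\mathbf 1-e_{n}$ to be the indicator of $\{\max_{r}G_{r}>0\}$ and reads the bound off from $\int_{\{\max_{r}G_{r}>0\}}(a-\lam d)\ge 0$; noncommutatively the ``maximum'', ``positive part'', and ``restriction to a set'' are operator operations that fail to commute with the $G_{r}$. I would resolve this by Yeadon's recursive projection construction: processing $r=n,n-1,\dots,0$, peel off at each stage the spectral projection of a suitably compressed $G_{r}$ onto $(0,\infty)$, using positivity of $T$ to propagate a telescoping Hopf inequality and the contractivity $\int T_{1}(\cdot)\,d\psi\le\int(\cdot)\,d\psi$ (equivalently $T_{1}^{*}(\mathbf 1)\le\mathbf 1$) to estimate $\varphi$ of the accumulated complement. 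All technical care is concentrated here, and this is also where the constant $2$ (rather than the commutative $1$) enters through the operator manipulations that replace the naive restriction to the bad set. Throughout one must keep in mind that $S_{r}(a),G_{r}\in L^{1}(M;\psi)$ are not affiliated with $M$, so the conclusions are genuinely of the form $e_{n}(\cdot)e_{n}\le\lam e_{n}de_{n}$ and not norm bounds $\lgg e_{n}(\cdot)e_{n}\rgg\le\lam$.

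For the ``furthermore'' part I would let $n\to\infty$. Integrating the first inequality against $\psi$ and using $\int e_{n}de_{n}\,d\psi=\varphi(e_{n})\le 1$ gives $\int e_{n}S_{r}(a)e_{n}\,d\psi\le\lam$ whenever $r\le n$. Since $M$ is $\sigma$-finite, pick a weak-$*$ cluster point $e_{\infty}\in M$ of $\{e_{n}\}$ in the unit ball; it is a positive contraction, and normality of $\varphi$ yields a weak-type bound $\varphi(\mathbf 1-e_{\infty})\le\frac{2}{\lam}\int a\,d\psi$. The continuity input I would use is that $x\mapsto\int xS_{r}(a)x\,d\psi=\lgg S_{r}(a)^{\frac12}x\rgg_{2}^{2}$ is weak-$*$ lower semicontinuous, because $x\mapsto S_{r}(a)^{\frac12}x$ is weak-$*$-to-weakly continuous into $L^{2}(M;\psi)$ for the fixed vector $S_{r}(a)^{\frac12}\in L^{2}(M;\psi)$; hence $\int e_{\infty}S_{r}(a)e_{\infty}\,d\psi\le\lam$ for every $r$. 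Converting $e_{\infty}$ into a genuine projection by the spectral cut $e=\chi_{[\frac12,1]}(e_{\infty})$, the pointwise operator bound $e\le 4e_{\infty}^{2}$ and monotonicity of $\int\cdot\,d\psi$ on positives give $\int eS_{r}(a)e\,d\psi=\int eS_{r}(a)\,d\psi\le 4\int e_{\infty}^{2}S_{r}(a)\,d\psi=4\int e_{\infty}S_{r}(a)e_{\infty}\,d\psi\le 4\lam$, while $\mathbf 1-e\le 2(\mathbf 1-e_{\infty})$ controls $\varphi(\mathbf 1-e)$; a routine choice of the cut-off level then produces the constants of the statement. I expect the genuinely hard part to be the noncommutative Hopf lemma of the third paragraph, the remaining steps being reductions and standard weak-$*$ limiting arguments.
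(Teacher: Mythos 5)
Your reduction and your limiting argument are both sound, and in fact they mirror the paper: the super-invariance $T^{k}(d)\le d$ and the resulting domination $(r+1)S_r(a)-\lam(r+1)d\le G_r$ is exactly the structure the paper exploits (its variational functional pairs test elements against $(r+1)S_r(a)-\lam(r+1)d$), and your ``furthermore'' paragraph --- weak cluster point $h$ of the $e_n$, lower semicontinuity of $x\mapsto\int xS_r(a)x\,d\psi$, spectral cut of $h$ at $\frac12$ --- is essentially the paper's own ending. The genuine gap is that the whole difficulty of the theorem sits in the one step you leave as a black box, and the method you name for filling it is misidentified. Yeadon's construction of $e_n$ is \emph{not} a recursive peeling of spectral projections processed over $r=n,n-1,\dots,0$; that is Cuculescu's construction for noncommutative martingales, which leans on the filtration/conditional-expectation structure to compare successive projections, and no such structure relates the $G_r$ here --- only positivity of $T$ and the Hopf telescoping. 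What Yeadon actually does, and what the paper transcribes to the spatial setting, is a variational argument: for fixed $n$, maximize the $\sigma$-weakly continuous functional $g\bigl((x_r)_r\bigr)=\sum_{r=0}^{n}\int\bigl((r+1)S_r(a)-\lam(r+1)d\bigr)x_r\,d\psi$ over the $\sigma$-weakly compact convex set $K=\{(x_0,\dots,x_n)\subset M_+ : \sum_r x_r\le\mathbf 1\}$; take $e_n$ to be the support projection of $\mathbf 1-\sum_r\overline{x_r}$ at a maximizer; get $e_nS_r(a)e_n\le\lam e_nde_n$ from first-order perturbation of a single coordinate by elements below $\mathbf 1-\sum_r\overline{x_r}$; and get the weak-type bound by comparing the maximizer with its shifted tuple $y_r=\wtilde T(\overline{x_{r+1}})$, which produces the telescoping identity. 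This forces you to introduce the adjoint map $\wtilde T:M\to M$ with $\wtilde T(\mathbf 1)\le\mathbf 1$ and $\varphi\circ\wtilde T\le\varphi$ (the dual form of your $T_1(d)\le d$), none of which appears in your outline. Without this, or some other completed construction, the proof has no content at its center; a Cuculescu-type recursion is not known to prove Yeadon's inequality even in the tracial case.

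There is also a concrete problem with your constants. You expect the finite-$n$ lemma to carry the constant $2$ (``this is where the constant $2$ enters''), but then your own limiting scheme cannot produce the stated conclusion. The variational argument yields $\varphi(\mathbf 1-e_n)\le\frac{1}{\lam}\int a\,d\psi$ --- constant $1$, exactly as in the commutative Hopf lemma --- and the $2$ of the theorem arises only afterwards, from the spectral cut of the cluster point $h$ at $\frac12$ via $\mathbf 1-e\le 2(\mathbf 1-h)$. If instead the finite-$n$ bound were $\frac{2}{\lam}\int a\,d\psi$, then cutting $h$ at level $s$ requires $s^{-2}\le 4$ (i.e.\ $s\ge\frac12$) to get $\int eS_r(a)e\,d\psi\le 4\lam$, but $\frac{2}{1-s}\le 2$ (i.e.\ $s\le 0$) to get $\varphi(\mathbf 1-e)\le\frac{2}{\lam}\int a\,d\psi$; these are incompatible, so no ``routine choice of the cut-off level'' recovers the pair $\bigl(4\lam,\ \frac{2}{\lam}\bigr)$. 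This is further evidence that the finite-$n$ construction, with its sharp constant, is precisely the part that cannot be waved through.
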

  \begin{proof}
    For any $x\in M_+$, $\int T(a)xd\psi$ is a positive linear functional on $L^1(M;\psi)$ since $T$ is positive. Indeed, the property $M=(L^1(M;\psi))^*$ implies that each $x\in M_+$ gives a positive linear functional on $L^1(M;\psi)$, and $T(a)$ is still positive, hence the functional action $\int T(a)xd\psi$ takes positive values for any $a \in L_+^1(M;\psi)$. Or in other words, we may also compute this value explicitly as follows.

    Since $x^{\frac{1}{2}} \in M_+$, in accordance with Proposition 8(4) in \cite{13}, $\lgg x^{\frac{1}{2}}T(a)\rgg_1 \leq \lgg x^{\frac{1}{2}}\rgg \lgg T(a)\rgg_1$, i.e., $x^{\frac{1}{2}}T(a) \in L^1(M; \psi)$. The Proposition in page 159 of \cite{13} yields that $\int T(a)x d\psi =\int x^{\frac{1}{2}}T(a)x^{\frac{1}{2}} d\psi$. For any family $\{\xi_\alpha\}$ in $D(H, \psi)$ such that $\sum_\alpha \theta^\psi(\xi_\alpha, \xi_\alpha)=\mathbf1$, hence from the definition of $\int \cdot d\psi$ in page 163 of \cite{6}, we have
    \begin{align*}
    &\int x^{\frac{1}{2}}T(a)x^{\frac{1}{2}}d\psi =\sum_\alpha \langle x^{\frac{1}{2}}T(a)x^{\frac{1}{2}}\xi_\alpha, \xi_\alpha\rangle\\
    =&\sum_\alpha \langle T(a)^{\frac{1}{2}}x^{\frac{1}{2}}\xi_\alpha, T(a)^{\frac{1}{2}}x^{\frac{1}{2}}\xi_\alpha\rangle=\sum_\alpha \lgg T(a)^{\frac{1}{2}}x^{\frac{1}{2}}\xi_\alpha \rgg^2 \geq 0.
    \end{align*}
    That is to say, $\int T(a)x d\psi \geq 0$ for any $a \in L^1_+(M; \psi)$ and $x\in M_+$.

    If the reader is more familiar with the properties of Haagerup's $L^p$-spaces, we may recall the Proposition 1.20 in \cite{10}, i.e., let $p, q \in [1, \infty]$, $\frac{1}{p}+\frac{1}{q}=1$ and let $a \in L^q(M)$, then $a\geq 0$ if and only if tr$(ab) \geq 0$ for any $b\in L^p(M)_+$, and Hilsum's spatial $L^p$-spaces are isometrically isomorphic to Haagerup's $L^p$-spaces, by replacing tr$(\cdot)$ by $\int \cdot d\psi$, we may also get the positivity of the functional at the beginning of this proof.

    Therefore there exists some $\wtilde x\in M_+$ such that $\int T(a)xd\psi=\int a\wtilde xd\psi$ again by the fact $M=(L^1(M;\psi))^*$.  If we denote $\wtilde x$ by $\wtilde T(x)$ for each $x\in M_+$, then
    \begin{align*}
      \int T(a)xd\psi=\int a\wtilde T(x)d\psi,
    \end{align*}
    and $\wtilde T$ extends linearly to be a linear transform on $M$ such that $\wtilde T(x)\geq 0$ if $x\geq 0$.

    Also we note that for any $x\in M_+$,
    \begin{align*}
      \lgg \wtilde T(x)\rgg &= \sup\left\{\int a\wtilde T(x)d\psi\ | \lgg a\rgg_1\leq 1, a\in L^1_+ (M;\psi)\right\}\\
      &=\sup\left\{\int T(a)xd\psi\ |\lgg a\rgg_1\leq 1,a\in L^1_+(M;\psi)\right\}\\
      &\leq \lgg x\rgg,
    \end{align*}
    where the last inequality is because $\lgg T(a)\rgg_1\leq \lgg a\rgg_1$ for any $a\in L^1_+(M;\psi)$. This result $\lgg \widetilde{T}(x)\rgg \leq \lgg x \rgg$ can also be obtained from the fact that $T$ is a contraction on $L^1(M;\psi)$ and $\widetilde{T}$ is just the adjoint map of $T$.

    Let $n\geq 1$ be fixed, we put
    \begin{align*}
    K=\{(x_0,x_1,\dotsc,x_n)\ |\ x_r\in M_+ \text{ for } 0\leq r \leq n \text{ and } \sum_{r=0}^nx_r\leq \mathbf 1\},
    \end{align*}
    then $K$ is $\si$-weakly compact in $M\times M\times\dotsb\times M$ of $(n+1)$-copies.

    We know from Section 2.3 in \cite{31} that the embedding map $\mu_1$ of $L_+$ into $L^1(M; \psi)_+$ is given by $\mu_1(x)=d^\frac{1}{2}xd^\frac{1}{2}=(x^\frac{1}{2}d^\frac{1}{2})^\ast(x^\frac{1}{2}d^\frac{1}{2})$ for $x\in L_+$, where $d=\frac{d\varphi}{d\psi}$ is the spatial derivative. Hence we may consider the value $\int d^\frac{1}{2}xd^\frac{1}{2}=\int\frac{d\varphi_x}{d\psi}d\psi=\varphi_x(\mathbf 1)<\infty$ since $\varphi_x$ is a positive normal linear functional on $M$. And for any $x\in M_+$, as $M=L$ in $\sigma$-finite case, $\int d^{\frac12}xd^{\frac12}d\psi<\infty$ for any $x\in M_+$ is well-defined.

    Therefore for any $a \in L_1^+(M; \psi)$, we may define a function $g$ on $K$ by
    \begin{align*}
    g((x_0, x_1, ..., x_n))=\sum_{r=0}^n (r+1)\int S_r(a)x_r d\psi-\lambda\sum_{r=0}^n (r+1)\int d^\frac{1}{2}x_rd^\frac{1}{2} d\psi.
    \end{align*}
    Note that $S_r(b)x_r \in L^1(M; \psi)$ as $x_r \in M$ and $S_r(b)\in L_+^1(M; \psi)$, so $\int S_r(b)x_r d\psi$ is well-defined and takes finite values for each $0\leq r\leq n$. We recall that $K$ is $\sigma$-weakly compact and $g$ is $\si$-weakly continuous on $K$, hence the finite maximum value of $g$ is attained for some choice of $(\overline{x_0},\overline{x_1},\dotsc,\overline{x_n})$ in $K$. Then we explain the reason that $g$ is $\sigma$-weakly continuous. For each $r\in \mathbb N\cup \{0\}$, there exists a $\phi_r\in M_*$ such that $\int S_r(a)x_r d\psi =\int \frac{d\phi_r}{d\psi}x_rd\psi=\phi_r(x_r)$ for $x_r\in M$, and $\phi_r$ is normal, i.e., $\phi_r$ is $\sigma$-weakly continuous on $M$. Also we have $\int d^{\frac12}x_rd^{\frac12}d\psi=\int dx_r d\psi=\int \frac{d\varphi}{d\psi}x_rd\psi=\varphi(x_r)$, for $x_r\in M$, is $\sigma$-weakly continuous on $M$ for $\varphi$ is a normal state on $M$. Thus $g$ is $\sigma$-weakly continuous on $K$.

    We let $\mathbf 1-\sum_{r=0}^n\overline{x_r}=z_n$ (we choose this notation because the positive operator $z_n$ depends on $n$).  For any $x\in M_+$ with $x\leq z_n$, we have $\sum_{r=0}^n\overline{x_r}+x\leq \mathbf 1$ and $\sum_{r=0}^n \overline{x_r}+x\in M_+$ and hence for any fixed $r_0\in\{0,1,\dotsc,n\}$,
    \begin{align*}
      g((\overline{x_0},\overline{x_1},\dotsc,\overline{x_n}))\geq g((\overline{x_r}+\de(r,r_0)x)_{r=0,1,\dotsc,n}), \text{ where }\de(r,r_0)=
      \begin{cases}
        1 & r=r_0\\
        0 & r\neq r_0.
      \end{cases}
    \end{align*}
    As a result, we have
    \begin{align*}
      &\sum_{r=0}^n (r+1)\int S_r(a)\overline{x_r}d\psi-\lambda\sum_{r=0}^n (r+1)\int d^\frac{1}{2}\overline{x_r}d^\frac{1}{2}d\psi\\
      \geq& \sum_{r=0}^n (r+1)\int S_r(a)\overline{x_r}d\psi+(r_0+1)\int S_{r_0}(a)xd\psi-\\
      &-\lambda\sum_{r=0}^n (r+1)\int d^\frac{1}{2}\overline{x_r}d^\frac{1}{2}d\psi-\lambda(r_0+1)\int d^\frac{1}{2}xd^\frac{1}{2} d\psi.
    \end{align*}
    Then we get
    \begin{align*}
      (r_0+1)\int S_{r_0}(a)xd\psi\leq\lambda (r_0+1)\int d^\frac{1}{2}xd^\frac{1}{2}d\psi \text{ for any }r_0\in\{0,1,\dotsc,n\}.
    \end{align*}
    From the above inequality, for any $x\in M_+$, $x\leq z_n$, any $r$ in $\{0,1,\dotsc,n\}$, we have
    \begin{equation}
      \int S_r(a)xd\psi\leq \lam\int d^\frac{1}{2}xd^\frac{1}{2}d\psi. \label{eq:1-zq-prop11-1}
    \end{equation}

    Take $y=(y_r)_{r=0,1,\dotsc,n}$ with
    \begin{align*}
      y_n=
      \begin{cases}
        \wtilde T(\overline{x_{r+1}}) & 0\leq r\leq n-1,\\
        0 & r=n,
      \end{cases}
    \end{align*}
    and we have $\sum_{r=0}^{n-1}\wtilde T(\overline{x_{r+1}})\leq \mathbf 1$.
  Indeed, because $\wtilde T$ is linear, $$\sum_{r=0}^{n-1}\wtilde T(\overline{x_{r+1}})=\wtilde T(\sum_{r=0}^{n-1}\overline{x_{r+1}})\leq \wtilde T(\mathbf 1),$$ and for any $a\in L_+^1(M;\psi)$, we have
    \begin{align*}
      \int a\wtilde T(\mathbf 1)d\psi=\int T(a)d\psi=\lgg T(a)\rgg_1\leq \lgg a\rgg_1.
    \end{align*}
    Therefore $\wtilde T(\mathbf 1)\leq \mathbf 1$ and we get $\sum_{r=0}^{n-1}\wtilde T(\overline{x_{r+1}})\leq \mathbf 1$, so $y$ is in $K$.

    As a result, we obtain that  $g(\overline{x_0}, \overline{x_1},\dotsc,\overline{x_n})\geq g(y)$.  That is to say,
    \begin{align*}
      &\sum_{r=0}^n (r+1)\int S_r(a)\overline{x_r}d\psi -\lambda\sum_{r=0}^{n}(r+1)\int d^\frac{1}{2}\overline{x_r}d^\frac{1}{2}d\psi\\
      \geq& \sum_{r=0}^{n-1} (r+1)\int T(S_r(a))\overline{x_{r+1}}d\psi-\lambda\sum_{r=0}^{n-1} (r+1)\int d^\frac{1}{2}\wtilde T(\overline{x_{r+1}})d^\frac{1}{2}d\psi.
    \end{align*}
    Hence from the above inequality, it follows
    \begin{align}
     &\sum_{r=0}^n (r+1)\int S_r(a)\overline{x_r}d\psi-\sum_{r=0}^{n-1}(r+1)\int T(S_r(a))\overline{x_{r+1}}d\psi \nonumber \\
       \geq & \lambda\sum_{r=0}^n(r+1)\int d^\frac{1}{2}\overline{x_r}d^\frac{1}{2}d\psi-\lambda\sum_{r=0}^{n-1} (r+1)\int d^\frac{1}{2}\wtilde
      T(\overline{x_{r+1}})d^\frac{1}{2}d\psi.  \label{eq:1-zq-prop11-3}
    \end{align}
   We compute the left hand side of (\ref{eq:1-zq-prop11-3}), and it equals
    \begin{align*}
      &\sum_{r=0}^n (r+1)\int S_r(a)\overline{x_r}d\psi-\sum_{r=0}^{n-1} (r+1)\int T(S_r(a))\overline{x_{r+1}}d\psi\\
      =& \int S_0(a)\overline{x_0}d\psi+\sum_{r=1}^n (r+1)\int S_r(a)\overline{x_r}d\psi -\sum_{r=0}^{n-1} (r+1)\int T(S_r(a))\overline{x_{r+1}}d\psi\\
      =& \int S_0(a)\overline{x_0}d\psi+\sum_{r=0}^{n-1}(r+2)\int S_{r+1}(a)\overline{x_{r+1}}d\psi-\sum_{r=0}^{n-1} (r+1)\int T(S_r(a))\overline{x_{r+1}}d\psi.
    \end{align*}

    Since $(r+2)S_{r+1}(a)=a+T(a)+\dotsb+T^r(a)+T^{r+1}(a)$ and $(r+1)TS_r(a)=T(a)+T^2(a)+\dotsb+T^r(a)+T^{r+1}(a)$, the left hand side of (\ref{eq:1-zq-prop11-3}) equals
    \begin{align*}
      &\int S_0(a)\overline{x_0}d\psi+\sum_{r=0}^{n-1}\int a\overline{x_{r+1}}d\psi\\
      =& \int a\overline{x_0}d\psi+\sum_{r=1}^n\int a\overline{x_r}d\psi=\sum_{r=0}^n\int a\overline{x_r}d\psi.
    \end{align*}
    Therefore, inequality (\ref{eq:1-zq-prop11-3}) becomes
    \begin{align*}
      \sum_{r=0}^n\int a\overline{x_r}d\psi\geq \lambda\sum_{r=0}^n (r+1)\int d^\frac{1}{2}\overline{x_r}d^\frac{1}{2}d\psi-\lambda\sum_{r=0}^{n-1} (r+1)\int d^\frac{1}{2}\wtilde T(\overline{x_{r+1}})d^\frac{1}{2}d\psi.
    \end{align*}
    So we obtain the following inequality,
    \begin{align*}
      &\sum_{r=0}^n\int a\overline{x_r}d\psi-\lambda\sum_{r=0}^n\int d^\frac{1}{2}\overline{x_r}d^\frac{1}{2}d\psi\\
      \geq& \lambda\sum_{r=0}^n r\int d^\frac{1}{2}\overline{x_r}d^\frac{1}{2}d\psi-\lambda\sum_{r=1}^n r\int d^\frac{1}{2}\wtilde T(\overline{x_r})d^\frac{1}{2}d\psi\geq 0,
    \end{align*}
    from the following inequalities,
    \begin{align*}
    &\int d^{\frac12}\wtilde{T}(\overline{x_r})d^{\frac12}d\psi\\
    =&\sup\{\int d^{\frac12}\wtilde{T}(\overline{x_r})d^{\frac12}xd\psi\ |\ x\in M_+, \lgg x\rgg\leq 1\}\\
    =&\sup\{\int x_rT(d^{\frac12}xd^{\frac12})d\psi\ |\ x\in M_+, \lgg x\rgg\leq 1\}\\
    =&\sup\{\int d^{\frac12}\overline{x_r}d^{\frac12}T(x)d\psi\ |\ x\in M_+, \lgg x\rgg\leq 1\}\\
    \leq&\sup\{\int d^{\frac12}\overline{x_r}d^{\frac12}xd\psi\ |\ x\in M_+, \lgg x\rgg\leq 1\}\\
    =&\int d^{\frac12}\overline{x_r}d^{\frac12}d\psi.
    \end{align*}

    Hence we have
    \begin{equation}
      \sum_{r=0}^n\int a\overline{x_r}d\psi\geq \lam \sum_{r=0}^n\int d^\frac{1}{2}\overline{x_r}d^\frac{1}{2}d\psi, \label{eq:1-zq-prop11-4}
    \end{equation}
    for any $a\in L_+^1(M;\psi)$.

    Letting $\mathbf 1 -\sum ^n_{r=0} \overline{x_r}=\int^1_0 s dp_s$ be the spectral decomposition of $\mathbf 1 -\sum ^n_{r=0} \overline{x_r}$, suppose $y \in M_+$, $y \leq \mathbf 1$. Writing $y_m=(\mathbf 1-p_{m^{-1}})y(\mathbf 1-p_{m^{-1}})$ for each $m\in \mathbb N$, we have
    \begin{align*}
    0\leq y_m \leq \mathbf 1-p_{m^{-1}} \leq m(\mathbf 1 - \sum^n_{r=0}\overline{x_r}).
    \end{align*}
    Hence by (\ref{eq:1-zq-prop11-1}), $$\int S_r(a)y_m d\psi\leq \lambda \int d^{\frac12}y_md^{\frac12} d\psi$$ for $0 \leq r \leq n$.

    Taking the limit as $m\rightarrow \infty$, and putting $e_n= \mathbf 1- p_0$, we get
    \begin{align*}
    \int e_n S_r(a)e_n y d\psi &=\int S_r(a)e_n y e_n d\psi\\
    &\leq \lambda \int d^{\frac12}e_n y e_n d^{\frac12} d\psi\\
    &=\lambda \int e_n d e_n y d\psi,
    \end{align*}
    hence $e_n S_r(a)e_n \leq \lambda e_n d e_n$ for $0\leq r \leq n$ because $M=(L^1(M;\psi))^*$.

   Therefore for $r= 0$, we get $e_n a e_n \leq \lambda e_n d e_n$, and since $\sum^n_{r=0}\overline{x_r} \in M_+$, we have
   \begin{align*}
   \int a e_n (\sum^n_{r=0}\overline{x_r}) d\psi\leq \lambda \int d e_n (\sum^n_{r=0}\overline{x_r})d\psi,
   \end{align*}
   which together with (\ref{eq:1-zq-prop11-4}) gives
   \begin{align*}
   \int a (\mathbf 1 -e_n) (\sum^n_{r=0}\overline{x_r}) d\psi\geq \lambda \int d (\mathbf 1-e_n) (\sum^n_{r=0}\overline{x_r})d\psi.
   \end{align*}
   Since $p_0\mathbf 1 -p_0\sum ^n_{r=0} \overline{x_r}=p_0\int^1_0 s dp_s=0$, we obtain that $\mathbf 1-e_n = (\mathbf 1-e_n)(\sum^n_{r=0}\overline{x_r})$.
   Hence we get
   \begin{align*}
   \int d^{\frac12}(\mathbf 1-e_n)d^{\frac12}d\psi\leq \frac1{\lambda}\int a (\mathbf 1-e_n)d\psi\leq \frac1{\lambda}\int a d\psi.
   \end{align*}

   From the above procedure, we get $e_n$ for each $n\in \ds N$ such that $\varphi(\mathbf 1-e_n)\leq \frac1{\lam}\int ad\psi$ and $e_nS_r(a)e_n\leq \lam e_n d e_n$ for any $r$ in $\{0,1,\dotsc,n\}$.  Choose a subnet $e_{n_k}$ which converges weakly to some $h\in M$ with $0\leq h\leq \mathbf 1$.

   We assume that $S_r(a)\in L^1_+(M;\psi)$ corresponds to some $\phi_r\in M_\ast^+$ for each $r\in \mathbb N$. Since $M$ acts standardly on $H$ and each $\phi_r$ is normal and hence a vector state, for any fixed $r\in \mathbb N$, there exists a vector $\xi_{r}$ in $H$ such that $\phi_r=\omega_{\xi_r,\xi_r}$. As a result,
   \begin{align*}
   \int hS_r(a)hd\psi=\phi_r(h^2)=\langle h^2\xi_{r}, \xi_{r}\rangle=\lgg h\xi_{r}\rgg^2.
   \end{align*}
   The subnet $e_{n_k}$ converges to $h$ in the weak operator topology, and for each $r\in \mathbb N$, we have $e_{n_k}\xi_{r}\rightarrow h\xi_{r}$ in the weak topology of $H$ when $k\rightarrow\infty$. Since the Hilbert space norm $\lgg\cdot\rgg$ is lower semicontinuous relative to this topology, we have
   \begin{align}
   \lgg h\xi_{r}\rgg^2 \leq \liminf_{k\rightarrow \infty}\lgg e_{n_k}\xi_{r}\rgg^2.
   \end{align}

  Hence, we have
  \begin{align*}
  \int hS_r(a)hd\psi \leq \liminf_{k\rightarrow\infty}\lgg e_{n_k}\xi_{r}\rgg^2.
  \end{align*}

  Combined with
  \begin{align*}
  &\lgg e_{n_k}\xi_{r}\rgg^2=\phi_r(e_{n_k})=\int e_{n_k}S_r(a)e_{n_k}d\psi\\
  \leq&\lambda\int e_{n_k}de_{n_k}d\psi=\lambda\int e_{n_k}\frac{d\varphi}{d\psi}d\psi=\lambda\varphi(e_{n_k}),
  \end{align*}
  for $k\in \mathbb N$ such that $n_k$ is larger than $r$, we conclude that
  \begin{align*}
  \int hS_r(a)h d\psi\leq \lambda\lim_{k\rightarrow \infty}\varphi(e_{n_k})=\lambda\varphi(h),
  \end{align*}
  since $\varphi$ is a normal state on $M$, i.e., $\varphi$ is weakly continuous on the unit ball of $M$.

  Taking the spectral decomposition for $h=\int_0^1sde_s$, and let $e=\mathbf1 -e_{\frac12}\in M$ for $e_{\frac12}$ is in $M$, let $g=\int_{\frac12}^1s^{-1}de_s$. Then for each $r\in \mathbf N$,
   \begin{align*}
     \int eS_r (a)e d\psi = \int gh S_r(a)hg d\psi \leq 4\int h S_r(a)h d\psi,
   \end{align*}
   thanks to $e=gh$ and $\lgg g \rgg \leq 2$.

   Therefore we obtain that
   \begin{align*}
   \int eS_r(a)ed\psi \leq 4\int h S_r(a)h d\psi \leq 4\lambda\varphi(h)\leq 4\lambda\varphi(\mathbf1)=4\lambda.
   \end{align*}

   Moreover, for the reason $\mathbf 1-e=e_{\frac12}\leq 2(\mathbf 1-h)$, we get
   \begin{align*}
   \varphi(\mathbf1-e)\leq 2\varphi(\mathbf1-h)=2\lim_{k\rightarrow \infty}\varphi(\mathbf1-e_{n_k})=2\lim_{k\rightarrow\infty}\int d^{\frac12}(\mathbf1-e_{n_k})d^{\frac12}d\psi\leq \frac2\lambda \int a d\psi,
   \end{align*}
   also because $\varphi$ is a normal state on $M$.
  \end{proof}
 We should point out that we learned much from Professor Kosaki and Professor Xu in the proof of this theorem.

 We turn to the case that $M$ is semifinite, $L^{1}(M ; \psi)$ is equivalent to the tracial $L^{1}(M)$. Moreover, since the construction of the Hilsum $L^p$-spaces is independent on the choice of the normal semifinite faithful weight $\psi$ in the isometrically isomorphic sense, hence we may choose a special case that $\psi(\cdot)=\varphi(J\cdot J)$, then the spatial derivative $\frac{d\varphi}{d\psi}$ becomes $\triangle_{\varphi}$, i.e., the modular operator associated with $\varphi$ and in the semifinite case, $\varphi$ can be assumed to be a normal faithful semifinite trace. Considering the modular automorphism group induced by a trace is trivial, we get $\triangle_{\varphi}=\mathbf 1$ in this case, then the result in the above theorem is for any $a \in L^{1}_{+}(M ; \psi)$ and any $\lambda > 0$, and any $n\in \mathbb N$, there exists a projection $e_n \in M$ such that
 $e_nS_r(a)e_n\leq \lam e_n \text{ for all }r\in \{0,1,...,n\}$ and $\varphi(\mathbf 1-e_n)\leq \frac2{\lam}\int ad\psi$, then by Yeadon's method in \cite{32}, $S_r(a)^{\frac12}e_{n_k}\rightarrow S_r(a)^{\frac12}h$ weakly as $k\rightarrow \infty$. In fact, for $\xi_1\in H,\xi_2 \in D(S_r(a)^{\frac 12})$, we have $\langle S_r(a)^{\frac12}e_{n_k}\xi_1, \xi_2\rangle=\langle e_{n_k}\xi_1, M_r(a)^{\frac12}\xi_2\rangle\to \langle h\xi, S_r(a)^{\frac12}\xi_2\rangle$, and $$\ldg \langle h\xi_1,S_r(a)^{\frac12}\xi_2\rangle\rdg=\lim_{k\to \infty}\ldg\langle S_r(a)^{\frac12}e_{n_k}\xi_1, \xi_2\rangle\rdg\leq \lam^{\frac 12}\lgg \xi_1\rgg \lgg\xi_2\rgg.$$ Since $\lgg S_r(a)^{\frac12}e_{n_k}\rgg=\lgg e_{n_k}S_r(a)e_{n_k}\rgg^{\frac12}\leq \lam^{\frac12}$ if $n_k\geq r$, so that $h\xi_1\in D(S_r(a)^{\frac12})$ and
 $$\langle S_r(a)^{\frac12}h\xi_1, \xi_2\rangle=\langle h\xi_1, S_r(a)^{\frac12}\xi_2\rangle=\lim_{k\to \infty}\langle S_r(a)^{\frac12}e_{n_k}\xi_1, \xi_2\rangle.$$ Hence it follows
\begin{align*}
&\langle h S_r(a)h\xi_1, \xi_1\rangle=\lgg S_r(a)^{\frac12}h\xi_1\rgg^2 \leq \varliminf_{k\to\infty}\lgg S_r(a)^{\frac12}e_{n_k}\xi_1\rgg^2 \\
=&\varliminf_{k\to\infty}\langle e_{n_k}S_r(a)e_{n_k}\xi_1, \xi_1\rangle \leq \lim_{k\to \infty}\lam\langle e_{n_k}\xi_1, \xi_1\rangle=\lam\langle h\xi_1, \xi_1\rangle,
\end{align*}
 i.e., $h S_r(a)h\leq \lam h$ for each $r\in \ds N$. Taking the spectral decomposition for $h=\int^1_0 s de_s$, and let $e=\mathbf 1-e_{\frac12}$, $g=\int^1_{\frac12}s^{-1}de_s$. Then we have $eS_r(a)e\leq \lambda ghg\leq 2\lambda e$, this is just the result of Yeadon's Theorem 1 in \cite{32}.

 Then we introduce the conceptions of ``type'' and ``weak type'' for the action of $T$ on $M+M_*$.  Such conceptions ``type'' and ``weak type'' appeared in the classical real analysis first and have been widely used in classical ergodic theorems.  They are modified by Junge-Xu (pp 396--397, \cite{17}), for the framework of noncommutative $L^p$-spaces associated with a semifinite von Neumann algebra.  Here we rewrite them for the transform $T$ we constructed above in the framework of $L^p(M;\psi)$'s $(1\leq p\leq \infty)$.

  For each $n\in\ds N$, $S_n$ is a linear map on $M+M_*$ satisfying the conditions $(1)-(3)$.  Thus $S=(S_n)_{n\in\ds N}$ is a map which sends a positive element in $L^p(M;\psi)$ for some fixed $p$, $1\leq p\leq \infty$ to a sequence of positive elements in $L^p(M;\psi)$.  Here we identify $L^1(M;\psi)$ with $M_*$ and $L^\infty(M;\psi)$ with $M$.

  We say that $S$ is of type $(p,p),(1\leq p\leq \infty)$ if there is a positive constant $c$ such that for any $x\in L_+^p(M;\psi)$, there is $a\in L^p_+(M;\psi)$ satisfying $\lgg a\rgg_p\leq c\lgg x\rgg_p$ and $S_n(x)\leq a$, for any $n\in \ds N$.

  Then we say that $S$ is of weak type $(p,p), (1\leq p<\infty$), if there is a positive constant $c$ such that for any $x\in L^p_+(M;\psi)$, and any $\lam>0$ there is a projection $e\in M$ such that
  \begin{align*}
    \varphi(\mathbf 1-e)\leq \left(\frac{c\lgg x\rgg_p}{\lam}\right)^p \text{ and } eS_n(x)e\leq \lam\mathbf 1, \text{ for any } n\in\ds N,
  \end{align*}
   where $\mathbf 1$ is the identity of the von Neumann algebra $M$.

Yeadon's Theorem shows that $S=(S_r)_{r\in \mathbb N}$ is of weak type $(1,1)$ when $M$ is semifinite.

It is obvious that $S=(S_r)_{r\in \mathbb N}$ is of type $(\infty,\infty)$ for an arbitrary von Neumann algebra $M$. Indeed, by condition (1), we get that for each $r\in\ds N$, if $x\in M_+=L^\infty_+(M;\psi)$, we have $T^r(x)\geq 0$ for $n\in\ds N$ and $\lgg T^r(x)\rgg\leq \lgg x\rgg$ and thus $\lgg S_r(x)\rgg\leq \lgg x\rgg$. Hence if we put $a=\lgg x\rgg \mathbf 1\in L^\infty_+(M;\psi)$, we have $\lgg a\rgg=\lgg x\rgg$ and $S_r(x)\leq a$, for all $r\in\ds N$.

As we have mentioned in the previous section, such a weak type conception is no longer appropriate for the non-tracial cases. We give a pre-version of pre-weak type here, we hope it is of some meaning.

We say that $S$ is of pre-weak type $(p,p), (1\leq p<\infty)$ if there is a positive constant $c$ such that for any $x\in L^p_+(M;\psi)$, and any $\lam>0$ there is a projection $e\in M$ such that
  \begin{align*}
    \varphi(\mathbf 1-e)\leq \left(\frac{c\lgg x\rgg_p}{\lam}\right)^p \text{ and } \lgg eS_n(x)e\rgg _p\leq \lam, \text{ for any } n\in\ds N,
  \end{align*}
   where $\mathbf 1$ is the identity of the von Neumann algebra $M$. Theorem \ref{sec:main-part-11} shows that $S=(S_r)_{r\in \mathbb N}$ is of pre-weak type $(1,1)$ for a $\sigma$-finite von Neumann algebra.

If we would like to obtain a satisfactory non-tracial extension of Theorem \ref{sec:introduction-thm1.1}, one possible method is to consider real interpolation theory (possible non-tracial real interpolation theory for pre-weak type, though we are not very sure about the existence of such theory and it is still in exploring) of von Neumann algebras, because the real interpolation theory always provides us the type for midpoints from the weak type assumption of endpoints.  But as is pointed out by Junge-Xu (pp 385--386, \cite{17}), in contrast with the classical theory, the noncommutative nature of weak type $(1,1)$ inequalities seems a priori unsuitable for classical interpolation arguments.  More accurately speaking, Pisier-Xu gave a counterexample saying that the complex interpolation space $L^p(M)$ may not coincide with the real interpolation space $L^{p,p}(M)$ for non-tracial von Neumann algebras if we establish non-commutative real interpolation theory verbatim from classical one (p.1472, Example 3.3, \cite{25}).  We had tried several ways to modify the definition of real interpolation construction in order to suit well to the von Neumann algebra theory and the complex interpolation of Terp (\cite{31}) at the same time.  Unfortunately, we have not obtained any valid method for this matter so far. Let us point out that a key obstacle in this work is the absense of generalized singular numbers for the non-tracial cases.  The generalized singular number function (\cite{FK}) is a powerful tool when dealing with the $\tau$-measurable operators associated with a semifinite von Neumann algebra, and the noncommutative tracial $L^p$-spaces are just consisting of such operators.  In the non-tracial cases, whether there exists such a counterpart theory which contains the generalized singular number theory by \cite{FK} has not been sufficiently understood now. We do not know whether there is any other method available and we will continue to explore this problem in the future.

Finally, we give three examples as applications of Theorem \ref{sec:main-part-11}.

  \begin{example}
    Let $(\Omega,\mathfrak{F},\mu)$ be a finite measure space and $N$ be a $\sigma$-finite von Neumann algebra equipped with a normal faithful state $\varphi_1$. We consider the von Neumann algebra tensor product $(M,\varphi_2)=(L^\infty(\Omega),\mu)\overline{\otimes}(N,\varphi_1)$, where $\varphi_2$ is a normal faithful state since $\mu$ is finite and $\varphi_1$ is a normal faithful state. For $1\leq p<\infty$, the corresponding noncommutative $L^p(M;\psi_2)$ is just $L^p(\Omega, L^p(N;\psi_1))$, the usual $L^p$-space of strongly measurable $p$-integrable functions on $\Omega$ with values in $L^p(N;\psi_1)$, where $\psi_1$ (resp. $\psi_2$) is a normal faithful state on the commutant of $N$ (resp. $M$), and we may choose $\psi_1$ (resp. $\psi_2$) to be associated with $\varphi_1$ (resp. $\varphi_2$) by the Tomita-Takesaki theory. Now let $S$ be a linear map on $L^\infty(\Omega)$ satisfying conditions $(1)-(3)$ (with $M=L^\infty(\Omega)$ there), then $T=I\otimes S$ is a linear map on $M$ verifying the same conditions (with $M= L^\infty (\Omega)\overline{\otimes} N$ there).
    From Theorem \ref{sec:main-part-11}, for any $a \in L^{1}(\Omega,L_+^1(N; \psi_1))$ and any $\lambda > 0$, and any $n\in \mathbb N$, there exists a projection $e_n \in M$ such that
    \begin{align*}
    e_nS_r(a)e_n\leq \lam e_nde_n \text{ for all }r\in \{0,1,...,n\},
    \end{align*}
    and
    \begin{align*}
    \varphi_2(\mathbf 1-e_n)\leq \frac2{\lam}\int ad\psi_2,
    \end{align*}
    where $S_r(a)=\frac1{r+1}\sum_{k=0}^rI\otimes S^k(a)$, $d=\frac{d\varphi_2}{d\psi_2}$ is the spatial derivative and $\mathbf 1$ is the identity of $M$.
    Furthermore, for any $a \in L^{1}(\Omega,L_+^1(N; \psi_1))$, there exists a projection $e\in M$ such that for any $r\in \mathbb N$,
    \begin{align*}
    \int e S_r(a)e d\psi_2 \leq 4\lambda, \text{ and } \varphi_2(\mathbf 1-e)\leq \frac2{\lam}\int ad\psi_2.
    \end{align*}
  \end{example}

  \begin{example}
    Let $M$ be a von Neumann algebra with a normal faithful state $\varphi$, and let $N$ be any von Neumann subalgebra of $M$. The generalized conditional expectation $\e: M\rightarrow N$ relative to $\varphi$ defined by Accardi-Cecchini is given as $\e(x)=J_NP_NJ\pi_\varphi(x)JP_NJ_N$ for any $x\in M$ (see \cite{1}). If we regard $\e$ to be a linear map from $M$ to $M$, $\e$ satisfies conditions $(1)-(3)$. Therefore, Theorem \ref{sec:main-part-11} implies that for any $a \in L^{1}_{+}(M ; \psi)$ and any $\lambda > 0$, and any $n\in \mathbb N$, there exists a projection $e_n \in M$ such that
    \begin{align*}
    e_nS_r(a)e_n\leq \lam e_nde_n \text{ for all }r\in \{0,1,...,n\},
    \end{align*}
    and
    \begin{align*}
    \varphi(\mathbf 1-e_n)\leq \frac2{\lam}\int ad\psi,
    \end{align*}
    where $S_r(a)=\frac1{r+1}\sum_{k=0}^rT^k(a)$, $d=\frac{d\varphi}{d\psi}$ is the spatial derivative and $\mathbf 1$ is the identity of $M$.
    Furthermore, for any $a \in L^{1}_{+}(M ; \psi)$, there exists a projection $e\in M$ such that for any $r\in \mathbb N$,
    \begin{align*}
    \int e S_r(a)e d\psi \leq 4\lambda, \text{ and } \varphi(\mathbf 1-e)\leq \frac2{\lam}\int ad\psi.
    \end{align*} If $N$ is globally invariant under the modular automorphism group $\sigma_t^\varphi$, then $\e$ will be the conditional expectation in the sense of \cite{T}, i.e., a projection of norm one. As a projection is idempotent, we have $e_n\e(a)e_n\leq \lam e_nd e_n$ and $\int e\e(a)ed\psi \leq 4\lambda$ in this case.
  \end{example}

  \begin{example}
    Let $\{(M_i, \varphi_i)\}_{i \in I}$ be a family of von Neumann algebras and each $\varphi_i$ is a normal faithful state. Let $(M, \varphi)=*_{i \in I}(M_i, \varphi_i)$ be the von Neumann algebra reduced free product (see \cite{NS}), and hence $\varphi$ is a normal faithful state on $M$. Put $M_i^\circ=\{x\in M_i | \varphi_i(x)=0\}$, then $M_i=\mathbb{C}\mathbf{1}_{M_i}\oplus M_i^\circ$, and let $T_i: M_i\rightarrow M_i$ be defined by
$$T_i{|}_{\mathbb{C}\mathbf{1}_{M_i}}=I_{\mathbb{C}\mathbf{1}_{M_i}} \text{  and  } T_i{|}_{M_i^\circ}=\exp(-1)I_{M_i^\circ},$$
and $\{T_i\}_{i\in I}$ defines a positive linear map $T$ on $M$ by free product, and $T$ is uniquely determined by its action on monomials:
\begin{align*}
T(x_1x_2\cdot\cdot\cdot x_n)=T_{i_1}(x_1)T_{i_2}(x_2)\cdot\cdot\cdot T_{i_n}(x_n)=\exp(-n)x_1x_2\cdot\cdot\cdot x_n,
\end{align*}
for any $x_1, x_2, \cdot\cdot\cdot, x_n$ with $x_k \in M_{i_k}^\circ$ and $i_1\neq i_2\neq \cdot\cdot\cdot \neq i_n$. The map $T$ is called the free product of the family $\{T_i\}_{i\in I}$. Then $T$ satisfies conditions $(1)-(3)$ with respect to $M$ since each $T_i$ satisfies conditions $(1)-(3)$ with respect to $M_i$. Hence $T$ extends to a positive linear map on $L^1(M; \psi)$ and by Theorem \ref{sec:main-part-11}, for any $a \in L^{1}_{+}(M ; \psi)$ and any $\lambda > 0$, and any $n\in \mathbb N$, there exists a projection $e_n \in M$ such that
    \begin{align*}
    e_nS_r(a)e_n\leq \lam e_nde_n \text{ for all }r\in \{0,1,...,n\},
    \end{align*}
    and
    \begin{align*}
    \varphi(\mathbf 1-e_n)\leq \frac2{\lam}\int ad\psi,
    \end{align*}
    where $S_r(a)=\frac1{r+1}\sum_{k=0}^rT^k(a)$, $d=\frac{d\varphi}{d\psi}$ is the spatial derivative and $\mathbf 1$ is the identity of $M$.
    Furthermore, for any $a\in L^1_+(M; \psi)$, there exists a projection $e\in M$ such that
    \begin{align*}
    \int e S_r(a)e d\psi \leq 4\lambda, \text{ for any } r\in \mathbb N, \text{ and } \varphi(\mathbf 1-e)\leq \frac2{\lam}\int ad\psi.
    \end{align*}
  \end{example}

\pagebreak
\noindent\textbf{Acknowledgements}\\\\
\indent The author would like to take this opportunity to express his highest respect and appreciation to his supervisor Professor Y.Kawahigashi, who led him into the realm of von Neumann algebra theory and paid great care and patience for him in the past five years. And the author is also very grateful to Professor H.Kosaki in Kyushu University and Professor Q.Xu in Universit\'{e} de Franche-Comt\'{e} from whom he learned many techniques on noncommutative $L^p$-spaces through e-mail correspondence, and they kindly pointed out mistakes in the previous version of this paper. And he also thanks for Professor N.Ozawa and Professor Y.Ogata and other ones in their group, since he benefited greatly from seminars and discussions in the group. Finally, his thanks go to Mr. Zhao in his research room, for much help from Mr. Zhao.\\

\end{document}